\documentclass[11pt,a4paper]{amsart}

\addtolength{\hoffset}{-1.1cm}
\addtolength{\textwidth}{2.2cm}

\addtolength{\voffset}{-1.8cm}
\addtolength{\textheight}{1.3cm}

\flushbottom 
\sloppy
\linespread{1.3}

\usepackage{caption}
\usepackage{subcaption}

\usepackage[english]{babel}
\usepackage{fancyhdr}
\usepackage{setspace}
\usepackage{amsmath,amsthm}
\usepackage{amssymb, amsfonts}
\usepackage[dvips]{graphicx}
\usepackage[ansinew]{inputenc}
\usepackage{enumerate}
\usepackage{bbm}
\usepackage{verbatim}

\newtheorem{theorem}{Theorem}[section]
\newtheorem{corollary}[theorem]{Corollary}
\newtheorem{lemma}[theorem]{Lemma}
\newtheorem{proposition}[theorem]{Proposition}

\theoremstyle{definition}
\newtheorem*{acknowledgements}{Acknowledgements}

\newtheorem{assumption}[theorem]{Assumption}

\theoremstyle{remark}

\newtheorem{example}{Example}[]

\newcommand{\R}{\mathbbm{R}}
\newcommand{\N}{\mathbbm{N}}

\newcommand{\argmax}{\operatornamewithlimits{argmax}}

\newcommand{\E}{\mathbbm{E}}

\begin{document}
\title{Optimal stopping and control near boundaries}
\author{Pekka Matom\"aki}
\address{Turku School of Economics, Department of Accounting and Finance, 20014 University of Turku, Finland}
\email{pjsila@utu.fi}
\date{\today}
\subjclass[2010]{62L15, 60G40, 60J60, 93E20}

\begin{abstract}
We will investigate the value and inactive region of optimal stopping and one-sided singular control problems by focusing on two fundamental ratios. We shall see that these ratios unambiguously characterize the solution, although usually only near boundaries. We will also study the well-known connection between these problems and find it to be a local property rather than a global one. The results are illustrated by a number of examples.
\end{abstract}
\keywords{Linear diffusion, Optimal stopping, Singular control}

\maketitle

\section{Introduction}
We consider an optimal stopping problem
\begin{align}\label{eq 1}
V(x)=\sup_\tau\E_x\left\{e^{-r\tau}g(X_\tau)\right\},\quad X_0=x,
\end{align}
as well as a singular control problem 
\begin{align*}
W_Z(x)=\sup_Z\E_x\left\{\int_0^\infty e^{-rs}g(X^Z_s)\circ dZ_s\right\},\quad X_0=x,
\end{align*}
where $r>0$ and $g:\R_+\to\R$ is an upper semicontinuous function and $g(X^Z_s)\circ dZ_s$ is understood as in \cite{JaJoZe08}. Furthermore, 
the underlying process $dX_t=\mu(X_t)dt+\sigma(X_t)dW_t$ is a linear, time-homogeneous It\^o diffusion on $\R_+$; $Z$ is a non-negative control process; and $dX^Z=\mu(X^Z_t)dt+\sigma(X^Z_t)dW_t-dZ_t.$ We will elaborate on these assumptions and notations in the sections to come.

\subsection{Core of the study}

We shall unambiguously determine the solution for these problems near the boundaries in quite a simple way. We will do this by focusing on two fundamental ratios. We use these ratios by embedding the Markovian methods and classical
diffusion theory (see e.g. \cite{Salminen85}) into the Beibel-Lerche method (see \cite{BeiLer97, BeiLer00}); for a
similar approach, see also [9].

In the present study, the singular control case is especially interesting since one often has to solve control problems globally when applying the usual variational arguments (cf. \cite{Karatzas83}). However, we are able to find inactive regions and the value function locally, i.e. near the boundary, without having to consider the problem on the whole state space. Moreover, this is the first time, that the author is aware of, that the Beibel-Lerche -method is used in a singular control situation. 

Another interesting finding concerns the renowned connection between one-sided singular control and optimal stopping problems (see e.g. \cite{KarShr84, KarShr85,BoeKoh98}). We will construct the classical associated optimal stopping problem whose value is the derivative of the value of the given singular control problem. We will, however, show that this property only holds true locally, i.e. near the boundaries; as we operate on a level more general than is typical in the literature, this connection does not automatically hold on the whole space. We shall actually see an example where the value of a stopping problem can be the associated value for two different singular control problem on disjoint sets. Most interestingly, this means that we can interpret the renowned connection between the one-sided singular control and optimal stopping problem to be, in general, a local property rather than a global one. Moreover, we get an interesting necessary connection when this connection can hold on the whole state space.

\subsection{Mathematical introduction}

We are using the Markovian method utilising the classical diffusion theory. This technique is based on work \cite{Salminen85}, where it is applied for optimal stopping problems in a linear diffusion setting. Similar technique has been later applied in different situations e.g. in \cite{Alvarez03,AlRa09,Lempa10,CroMor14}. We mix this Markovian method with the Beibel-Lerche method which is originally developed in \cite{BeiLer97, BeiLer00}, and which is later refined e.g. in different problem settings in \cite{LerUru07}, and using jump processes in \cite{Baurdoux07}. See also \cite{GapLer11} for an analysis considering the relations between Beibel-Lerche and free boundary approaches. The power of the Beibel-Lerche method is in the fact that it can identify the continuation region with quite simple arguments in a general context. 

\subsubsection*{Stopping problem}
Consider the optimal stopping problem \eqref{eq 1}. Our study is strongly based on the ratios $g/\psi$ and $g/\varphi$, where $\psi$ and $\varphi$ are the increasing and decreasing fundamental solutions of ODE $(\mathcal{A}-r)u(x)=0$, where $\mathcal{A}$ is the infinitesimal generator of $X$. The reason these ratios are used becomes clear after noticing that the stopping time $\tau_z=\inf\{t\geq0\mid X_t=z\}$, the first hitting time to a state $z$, provides a value (see e.g. II.10 in \cite{BorSal02})
\begin{align*}
\E_x\left\{e^{-r\tau_z}g(X_{\tau_z})\right\}=
\begin{cases}
\frac{g(z)}{\psi(z)}\psi(x),\quad &x\leq z\\
\frac{g(z)}{\varphi(z)}\varphi(x),\quad &x>z.
\end{cases}
\end{align*}
The formulation above suggests that the ratios $g/\psi$ and $g/\varphi$ could strongly dictate the behaviour of $V$. Indeed, one aspect of the paper is to show that the global maximum points of these ratios allow us to write down explicitly the value function near the boundaries, the upper semicontinuity of $g$ being the only restricting requirement. The areas "near the boundaries" that we are interested in are in fact the intervals $(0,z^*)$ and $(y^*,\infty)$, where  $z^*$ and $y^*$ are, respectively, the greatest and the smallest points that globally maximise $g/\psi$ and $g/\varphi$. In practice, this means that in every optimal stopping problem, as soon as one has found the global maximum points $z^*$ and $y^*$, the problem is completely solved near the boundaries. We shall also prove that the two sets, where the ratio $\frac{g}{\psi}$ is increasing and where $\frac{g}{\varphi}$ is decreasing belong to the continuation region. This is an observation that the author has not come across before.

Another aspect of the study is the following. From literature we know that under some weak assumptions $\tau_S$, a hitting time to the set $S=\{x\in \R_+\mid V(x)=g(x)\}$, is an optimal stopping time for \eqref{eq 1} whenever $\tau_S<\infty$ a.s. (see e.g. Theorem 2.7 in \cite{PesShi06}). Furthermore, we know (Theorem 2.4 in \cite{PesShi06}) that $\tau_S\leq \tau^*$ for any stopping time $\tau^*$ that provides the value $V(x)$. However, there have not been too many examples where $\tau_S\neq \tau^*$. In the present paper we offer such examples.

There are naturally also other studies concerning the fundamental ratios $g/\psi$ and $g/\varphi$, which is no surprise as they play central roles in solving (at least one-sided) optimal stopping problems. In \cite{Alvarez03} sufficient conditions are given under which the ratio $g/\psi$ has exactly one global maximum point, after which $g/\psi$ is decreasing, and it is shown that the value function is then unambiguously given everywhere. (The analogous result holds for the ratio $g/\varphi$.) Another direction is considered in \cite{ChrisIrle11}, where it is shown that the points which maximise the ratio $g/h$ for some $r$-excessive function $h$ are in the stopping region $S$. This directly gives that the points that globally maximise $g/\psi$ and $g/\varphi$ are in the stopping region $S$. 
Furthermore, in an extensive study \cite{LamZer13} on the optimal stopping of linear diffusions these ratios have also been inspected. In the aforementioned study (see also \cite{DayKar03,Dayanik08}), the finiteness of the ratios $g/\psi$ and $g/\varphi$ is linked to the finiteness of the value. Also, it is shown how these ratios agree on the boundaries to the ratios created by the value function. Additionally, in \cite{DayKar03}, and later in \cite{Dayanik08} in a more general setting, the optimal stopping time is characterised as a threshold rule relying on the ratios $g/\psi$ and $g/\varphi$ on the boundaries.

\subsubsection*{Singular control problem and connection to stopping problem} 
Taking advantage of the close relations between optimal stopping and singular control problems we shall utilise similar techniques for singular control problems. We will show that the Beibel-Lerche based method also works in a (one-sided) singular control situation, where the ratios $g/\psi'$ and $g/\varphi'$ dictate the value. Maintaining the harmony between a singular control and optimal stopping scene, we see that the solution to a singular control problem is unambiguously characterised on the intervals $(0,z'^*)$ and $(y'^*,\infty)$, though we now need some additional assumptions on the underlying diffusion. Here $z'^*$ and $y'^*$ are, respectively, the greatest and the smallest point that globally maximises $g/\psi'$ and $g/\varphi'$.

It is a well known and greatly studied fact that a singular stochastic control problem has a close relationship with optimal stopping problems. The close connection between a one-sided singular control problem (i.e. either downward or upward control is allowed, but not both) and an optimal stopping problem was already present in the seminal paper by \cite{BatChe66}. Later studies have shown it to hold in general (see \cite{KarShr84, KarShr85, BenRei04}): For a one-sided singular control problem there exists an associated optimal stopping problem such that the derivative of the value of the one-sided singular control problem is the value of the associated optimal stopping problem. 

This connection partially explains the $C^2$-smooth fit condition for the value of a control problem (cf. e.g. \cite{BayEga08}). Since the value of a stopping problem is often $C^1$ and is a derivative of the value of a control problem, we can interpret the $C^2$-condition as an inherited condition from a smooth fit condition of a stopping problem.

This connection can also be generalised to concern two-sided singular control problems (i.e. both downward and upward controls are present). Interestingly, for a two-sided singular control problem there exists an associated \emph{Dynkin game}, such that the derivative of the two-sided singular control problem constitutes the value of the associated Dynkin game (see \cite{KarWang01} and \cite{Boetius05}). 

Also, a relation between a singular control problem and switching problem has been presented in \cite{GuoTome08b,GuoTome08}. This connection states that the value of an singular control problem is also the value of an associated switching problem. Hence, interestingly, with sequential stopping the, somehow subordinate, derivative connection is converted to an equal connection. As a result, understandably, this connection has been identified as a "missing link" between singular control problems and Dynkin games. In addition, there is a connection between Dynkin games and backward stochastic differential equations with two reflecting barriers (see \cite{HamHas06}).

In our study we will link the two studied problems together by inspecting
 the well-known connection between the one-sided singular control problem and the optimal stopping problem on the intervals $(0,z'^*)$ and $(y'^*,\infty)$. In the literature, one usually has sufficient assumptions to ensure that this connection holds everywhere. However, we find that operating on a general level, this connection does not necessarily hold on the whole space, only near the boundaries.

The contents of this study are as follows. In Section \ref{sec prob} we will present the definitions and the optimal stopping problem in detail. This is followed in Section \ref{sec main} by our main results. These results are then illustrated by several short examples in Section \ref{sec example}. Sections \ref{sec control} and \ref{sec connection} are devoted to the singular stochastic control and its relationship to optimal stopping problems.

\section{Optimal stopping problem and definitions}\label{sec prob}

Let $X_t$ be a regular linear diffusion defined on a filtered probability space $(\Omega,\mathcal{F},\{\mathcal{F}_t\},\mathbf{P})$, evolving on $\R_+:=(0,\infty)$. The assumption that the state space is $\R_+$ is done for reasons of convenience --- It could be any interval on $\R$. We assume that $X_t$ is given as the solution of the It\^{o} equation
\begin{align}\label{eq X}
dX_t=\mu(X_t)dt+\sigma(X_t)dW_t,\quad X_0=x,
\end{align}
where $\mu:\R_+\to\R$ and $\sigma:\R_+\to\R_+$ are measurable mappings. We assume that for all $x\in\R_+$ there exists $\varepsilon>0$ such that $\mu$ and $\sigma$ satisfy the condition $\int_{x-\varepsilon}^{x+\varepsilon}\frac{1+|\mu(y)|}{\sigma^2(y)}dy<\infty$. This ensures that \eqref{eq X} has a unique weak solution (see e.g. Chapter V in \cite{KarShr88}). We assume that the boundaries $0$ and $\infty$ are natural, exit, entrance, or killing (for a characterisation of boundaries, see e.g. Section II.1 in \cite{BorSal02}). We understand that if the process hits an exit or a killing boundary, it is sent immediately to a cemetery state $\partial\notin \R_+$, where it stays for the rest of the time (cf. \cite{Dynkin65}, Subsection 3.1). Consequently, boundaries cannot be used as stopping points for a diffusion in any circumstances.
 
We study an optimal stopping problem
\begin{align}\label{eq prob}
V(x)=\sup_\tau\E_x\left\{e^{-r\tau}g(X_\tau)\right\},
\end{align}
where the supremum is taken over all $\mathcal{F}_t$-stopping times and $g:(0,\infty)\to\R$ is an upper semicontinuous reward function that attains positive values  for some $x\in\R_+$ (if $g\leq 0$ always, it is never optimal to stop). Notice that as $0$ and $\infty$ cannot be used as stopping points, $g$ is not necessarily defined on the boundaries and we further understand that $g(\partial)=0$. We denote by $C:=\{x\mid V(x)>g(x)\}$ the continuation region and by $S:=\{x\mid V(x)=g(x)\}$ the stopping region of the considered problem. Furthermore, we follow the notations of \cite{PesShi06} and define \emph{optimal stopping time} to be any stopping time $\tau^*$ at which the supremum \eqref{eq prob} is attained.

Denote by $\psi$ and $\varphi$ the increasing and decreasing fundamental solution to $\left(\mathcal{A}-r\right)u(x)=0$, where $r>0$ and $\mathcal{A}=\frac{1}{2}\sigma^2(x)\frac{d^2}{dx}+\mu(x)\frac{d}{dx}$ is the infinitesimal generator of the diffusion. We will analyse the behaviour of the solution applying the functions $g/\psi$ and $g/\varphi$. To prepare this, we denote by
\begin{itemize}
	\item $M:=\left\{x\mid x=\argmax\{g(x)/\psi(x)\}\right\}$ the set of global maximum points of $g/\psi$ and let $z^*=\sup\{M\}$ be the maximal element of that set; and
	\item $N:=\left\{x\mid x=\argmax\{g(x)/\varphi(x)\}\right\}$ the set of global maximum points of $g/\varphi$ and let $y^*=\inf\{N\}$ be the minimal element of that set.
\end{itemize}
For a boundary point $0$ we understand $g(0)/\psi(0)=\limsup_{x\to 0}g(x)/\psi(x)$ and $g(0)/\varphi(0)=\limsup_{x\to 0}g(x)/\varphi(x)$, and we use analogous interpretation for $\infty$. Especially we notice that with these interpretations $M,N\neq\emptyset$, and that $M$ and $N$ can include $0$ and $\infty$ although $g$ is not necessarily defined on them. Moreover, by upper semicontinuity we have $z^*\in M$ and $y^*\in N$. Further, also by upper semicontinuity, it is true that $\frac{g(b)}{\psi(b)}=\sup_{z}\left\{\frac{g(z)}{\psi(z)}\right\}$ and $\frac{g(a)}{\varphi(a)}=\sup_{y}\left\{\frac{g(y)}{\varphi(y)}\right\}$ for all $b\in M$ and $a\in N$. 

\section{Optimal stopping near boundaries}\label{sec main}
The proofs for the results in this section are given only applying the ratio $g/\psi$, as the proofs with the ratio $g/\varphi$ are treated analogously with obvious changes.

\subsection{Main results}

Let us first show that there are possibly several stopping strategies that provide the values $\frac{g(z^*)}{\psi(z^*)}\psi(x)$ and $\frac{g(y^*)}{\varphi(y^*)}\varphi(x)$.

\begin{lemma}\label{cor 5}
\begin{enumerate}[(A)]
	\item Let $a,b\in M$, $b\neq 0$, and $0\leq a\leq b\leq z^*<\infty$ and let $x\leq b$. Then the following stopping times yield the same value $\frac{g(z^*)}{\psi(z^*)}\psi(x)$:\label{cor 3A}
	\begin{enumerate}[(i)]
	\item $\tau_{z^*}=\inf\{t\geq0\mid X_t=z^*\}$.
	\item $\tau_{b}=\inf\{t\geq0\mid X_t=b\}$.
	\item $\tau_{\{a,b\}}=\inf\{t\geq0\mid X_t\in \{a,b\}\}$, for all $x\in[a,b]$.
	\item $\tau_{M^+}=\inf\{t\geq0\mid X_t\in M^+\}$, where $M^+\subset M$ is any subset that contains $z^*$.
\end{enumerate}
\item Let $a,b\in N$, $a\neq\infty$, and $0<y^*\leq a\leq b\leq \infty$ and let $a\leq x$. Then the following stopping times yield the same value $\frac{g(y^*)}{\varphi(y^*)}\varphi(x)$:\label{cor 3B}
	\begin{enumerate}[(i)]
	\item $\tau_{y^*}=\inf\{t\geq0\mid X_t=y^*\}$.
	\item $\tau_{a}=\inf\{t\geq0\mid X_t=a\}$.
	\item $\tau_{\{a,b\}}=\inf\{t\geq0\mid X_t\in \{a,b\}\}$, for all $x\in[a,b]$.
	\item $\tau_{N^+}=\inf\{t\geq0\mid X_t\in N^+\}$, where $N^+\subset N$ is any subset that contains $y^*$.
\end{enumerate}  
\end{enumerate}
\end{lemma}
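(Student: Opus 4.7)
The overall plan is to reduce every subcase to the hitting-time formula $\E_x\{e^{-r\tau_z}g(X_{\tau_z})\}=\frac{g(z)}{\psi(z)}\psi(x)$ (valid for $x\leq z$), which was recalled from \cite{BorSal02} in the introduction, together with the remark preceding the lemma that, by upper semicontinuity of $g$, every $c\in M$ satisfies $g(c)/\psi(c)=g(z^*)/\psi(z^*)$. Cases (i) and (ii) are then immediate: both are direct applications of the formula with $z=z^*$ and $z=b$ respectively, and the second returns the same number as the first because $b\in M$.

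For case (iii), I would compute $\E_x\{e^{-r\tau_{\{a,b\}}}g(X_{\tau_{\{a,b\}}})\}$ as a weighted sum of the classical two-barrier hitting values,
$$\E_x\{e^{-r\tau_{\{a,b\}}};\,X_{\tau_{\{a,b\}}}=a\}=\frac{\psi(b)\varphi(x)-\psi(x)\varphi(b)}{\psi(b)\varphi(a)-\psi(a)\varphi(b)},$$
and its counterpart for the atom at $b$. Substituting $g(a)=\frac{g(z^*)}{\psi(z^*)}\psi(a)$ and $g(b)=\frac{g(z^*)}{\psi(z^*)}\psi(b)$, the two $\psi(a)\psi(b)\varphi(x)$ terms cancel and the numerator collapses to $\psi(x)\bigl[\psi(b)\varphi(a)-\psi(a)\varphi(b)\bigr]$, killing the denominator and leaving exactly $\frac{g(z^*)}{\psi(z^*)}\psi(x)$, as desired.

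Case (iv) is the main obstacle, since $M^+$ is an arbitrary subset of $M$ and the diffusion may visit several candidate points before it commits to a specific one. My approach is to invoke the martingale $e^{-rt}\psi(X_t)$ and apply optional stopping at $\tau_{M^+}$. The crucial observation is that, because $z^*\in M^+$ and the paths are continuous with $x\leq b\leq z^*$, one has $\tau_{M^+}\leq\tau_{z^*}$, and on $[0,\tau_{z^*}]$ the trajectory stays in $(0,z^*]$, so monotonicity of $\psi$ yields $\psi(X_{t\wedge\tau_{M^+}})\leq\psi(z^*)$. This bound justifies optional stopping and gives $\E_x\{e^{-r\tau_{M^+}}\psi(X_{\tau_{M^+}})\}=\psi(x)$, with $\psi(\partial)=0$ absorbing the case in which the process is killed at the lower boundary before hitting $M^+$. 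Since $X_{\tau_{M^+}}\in M^+\subseteq M$ on $\{\tau_{M^+}<\infty\}$, we may substitute $g(X_{\tau_{M^+}})=\frac{g(z^*)}{\psi(z^*)}\psi(X_{\tau_{M^+}})$ inside the expectation, which delivers the announced value.

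Part (B) follows by the mirror argument: replace $\psi$ by $\varphi$, upward hitting by downward hitting, the bound $x\leq b\leq z^*$ by $y^*\leq a\leq x$, and use the companion formula $\E_x\{e^{-r\tau_z}g(X_{\tau_z})\}=\frac{g(z)}{\varphi(z)}\varphi(x)$ valid on $\{x\geq z\}$; the decreasing monotonicity of $\varphi$ supplies the analogous bound $\varphi(X_{t\wedge\tau_{N^+}})\leq\varphi(y^*)$ needed for the optional-stopping step.
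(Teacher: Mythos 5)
Your proof is correct. Parts (i), (ii), and (iii) follow essentially the same route as the paper's: direct application of the one-point hitting formula, equality of the ratios across $M$, and the two-barrier formula with the cancellation of the $\varphi(x)$ coefficient (your direct substitution $g(\cdot)=K\psi(\cdot)$ on $M$ and the paper's explicit rearrangement are the same computation presented slightly differently).

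Part (iv) is where you depart from the paper. The paper's argument is purely pathwise: by continuity of the sample paths, $\tau_{M^+}$ coincides with $\tau_{\{a',b'\}}$ where $b'$ is the smallest element of $M^+$ above $x$ and $a'$ is the largest below $x$ (or simply $\tau_{b'}$ if no such $a'$ exists), so the claim is reduced to the already-proven cases (iii) or (ii). Your approach instead runs optional stopping on the local martingale $e^{-rt}\psi(X_t)$, using $z^*\in M^+$ and path-continuity to bound $\psi(X_{t\wedge\tau_{M^+}})$ by $\psi(z^*)$; this yields $\E_x\{e^{-r\tau_{M^+}}\psi(X_{\tau_{M^+}})\}=\psi(x)$ and then $g=\frac{g(z^*)}{\psi(z^*)}\psi$ on $M$ finishes it. Both arguments are valid. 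The paper's reduction is slightly more elementary because it never needs to invoke the martingale property or deal with the boundary behaviour; your argument avoids identifying the bracketing pair $(a',b')$ explicitly, but it implicitly relies on the fact that $\psi(0+)=0$ for natural, exit, and killing boundaries (so the stopped discounted process remains a bounded martingale with no downward jump at the lifetime), and for entrance boundaries on the fact that $0$ is not reached in finite time --- a point you gloss over with the remark about $\psi(\partial)=0$. It would be worth being a little more explicit there, since the discounted local martingale is only a supermartingale in general, and equality in optional stopping hinges on exactly this normalization of $\psi$.
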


\begin{proof}
\noindent\textbf{(A)} \textbf{(i), (ii)} Since $b,z^*\in M$, we have for all $x\leq b$
\[\E_x\left\{e^{-r{\tau_{z^*}}}g(X_{\tau_{z^*}})\right\}=\frac{g(z^*)}{\psi(z^*)}\psi(x)=\frac{g(b)}{\psi(b)}\psi(x)=\E_x\left\{e^{-r{\tau_b}  }g(X_{\tau_{b}})\right\}.\]

\noindent \textbf{(iii)} Let $a\leq x\leq b$. Then the stopping rule $\tau_{\{a,b\}}$ gives a value (cf. Lemma 3.3 in \cite{LamZer13})
\begin{align*}
\E_x\left\{e^{-r\tau_{\{a,b\}}}g(X_{\tau_{\{a,b\}}})\right\}&=\E_x\left\{e^{-r\tau_a};\tau_a<\tau_b\right\}g(a)+\E_x\left\{e^{-r\tau_b};\tau_b<\tau_a\right\}g(b)\\
&=\frac{\varphi(a)g(b)-\varphi(b)g(a)}{\psi(b)\varphi(a)-\varphi(b)\psi(a)}\psi(x)+\frac{\psi(b)g(a)-\psi(a)g(b)}{\psi(b)\varphi(a)-\varphi(b)\psi(a)}\varphi(x).
\end{align*}
Now we can calculate that
\begin{align*}
\psi(b)g(a)-\psi(a)g(b)=\psi(a)\psi(b)\left(\frac{g(a)}{\psi(a)}-\frac{g(b)}{\psi(b)}\right)=0,
\end{align*}
and thus 
\begin{align*}
\E_x\left\{e^{-r\tau_{\{a,b\}}}g(X_{\tau_{\{a,b\}}})\right\}&=\frac{\varphi(a)g(b)-\varphi(b)g(a)}{\psi(b)\varphi(a)-\varphi(b)\psi(a)}\psi(x)\\
&=\frac{\left(\varphi(a)g(b)-\varphi(b)g(a)\right)\psi(b)+\varphi(b)\left(\psi(b)g(a)-\psi(a)g(b)\right)}{\psi(b)\left(\psi(b)\varphi(a)-\varphi(b)\psi(a)\right)}\psi(x)\\
&=\frac{g(b)\left(\psi(b)\varphi(a)-\psi(a)\varphi(b)\right)}{\psi(b)\left(\psi(b)\varphi(a)-\varphi(b)\psi(a)\right)}\psi(x)=\frac{g(b)}{\psi(b)}\psi(x),
\end{align*}
which is the same we got by using a stopping rule $\tau_b$.

\noindent \textbf{(iv)} As a diffusion is continuous, we must have $\tau_{M^+}=\tau_{\{a,b\}}$, where $b$ is the smallest point of $M^+$ such that $b\geq x$, and $a$ is the greatest point of $M^+$ such that $x\leq a$ (if no such $a$ exist, then $\tau_{M^+}=\tau_b$). Consequently the claim follows from part (iii) (or (ii))
\end{proof}

Let us state our main result, which is greatly inspired by Theorem 2 in \cite{BeiLer00}. 

\begin{theorem}\label{prop 1}
\begin{enumerate}[(A)]
	\item For $x\leq z^*$, the value function can be written as 
	\begin{align}\label{eq value}
	V(x)=\frac{g(z^*)}{\psi(z^*)}\psi(x).
	\end{align}
	Moreover, $\tau_{M^+}$ is an optimal stopping time, where $M^+\subset M$ is any subset that contains $z^*$. Lastly, $(0,z^*)\setminus M\subset C$, and $M\subset S$.\label{prop 1 A}
	\item For $x\geq y^*$, the value function can be written as
	\begin{align}\label{eq value phi}
	V(x)=\frac{g(y^*)}{\varphi(y^*)}\varphi(x).
	\end{align}
	Moreover, $\tau_{N^+}$ is an optimal stopping time, where $N^+\subset N$ is any subset that contains $y^*$. Lastly $(y^*,\infty)\setminus N\subset C$, and $N\subset S$. \label{prop 1 Ab}
	\item If $z^*=0$, then there is no $\varepsilon>0$ such that the admissible stopping time $\tau_{(0,\varepsilon)}=\inf\{t\geq0\mid X_t\notin(0,\varepsilon)\}$ yields the value for all $x<\varepsilon$. Similarly if $y^*=\infty$, then there is no $H<\infty$ such that the admissible stopping time $\tau_{(H,\infty)}=\inf\{t\geq0\mid X_t\notin(H,\infty)\}$ yields the value for all $x>H$.\label{prop 1 B}
\end{enumerate}
\end{theorem}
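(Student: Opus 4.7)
The plan is to derive the upper bound by a Beibel--Lerche martingale argument and obtain the matching lower bound from Lemma~\ref{cor 5}. The underlying fact is that $e^{-rt}\psi(X_t)$ is a non-negative local martingale (by It\^{o}'s formula together with $(\mathcal{A}-r)\psi=0$), hence a supermartingale, so $\E_x[e^{-r\tau}\psi(X_\tau)]\leq \psi(x)$ for every stopping time $\tau$, with the convention $\psi(\partial)=g(\partial)=0$ at the cemetery state. Combined with the pointwise bound $g\leq \frac{g(z^*)}{\psi(z^*)}\psi$ coming from the definition of $z^*$, this yields
\[
\E_x\!\left[e^{-r\tau}g(X_\tau)\right]\leq \frac{g(z^*)}{\psi(z^*)}\E_x\!\left[e^{-r\tau}\psi(X_\tau)\right]\leq \frac{g(z^*)}{\psi(z^*)}\psi(x),
\]
so $V(x)\leq \frac{g(z^*)}{\psi(z^*)}\psi(x)$ for every $x\in\R_+$.

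For part~(A), the reverse inequality on $\{x\leq z^*\}$ is immediate from Lemma~\ref{cor 5}(A)(i) applied to $\tau_{z^*}$, yielding~\eqref{eq value}, and optimality of $\tau_{M^+}$ is then exactly Lemma~\ref{cor 5}(A)(iv). For any $b\in M$, $V(b)=\frac{g(z^*)}{\psi(z^*)}\psi(b)=\frac{g(b)}{\psi(b)}\psi(b)=g(b)$, proving $M\subset S$; conversely, for $x\in(0,z^*)\setminus M$, $\frac{g(x)}{\psi(x)}<\frac{g(z^*)}{\psi(z^*)}$ forces $g(x)<V(x)$, so $x\in C$. Part~(B) is identical, with $\varphi,y^*$ replacing $\psi,z^*$.

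For part~(C), assume $z^*=0$ and, for contradiction, that some $\varepsilon>0$ is such that $\tau_{(0,\varepsilon)}$ yields $V$ on $(0,\varepsilon)$. Since $g(\partial)=0$, only exits at $\varepsilon$ contribute, and the standard identity $\E_x[e^{-r\tau_\varepsilon}]=\psi(x)/\psi(\varepsilon)$ for $x\leq \varepsilon$ (with $e^{-r\infty}:=0$, valid for any regular diffusion regardless of the behaviour at $0$, as killing or absorption at $0$ simply leaves $\tau_\varepsilon=\infty$) gives the value of this stopping time as $\frac{g(\varepsilon)}{\psi(\varepsilon)}\psi(x)$. Combining with $V(x)\geq g(x)$ forces $\frac{g(x)}{\psi(x)}\leq \frac{g(\varepsilon)}{\psi(\varepsilon)}$ on $(0,\varepsilon)$; taking $\limsup$ as $x\to 0^+$ yields $\frac{g(0)}{\psi(0)}\leq \frac{g(\varepsilon)}{\psi(\varepsilon)}$. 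Since $z^*=0$ means $\frac{g(0)}{\psi(0)}=\sup_z\frac{g(z)}{\psi(z)}$, equality must hold and hence $\varepsilon\in M$, contradicting $\sup M=0<\varepsilon$. The statement for $y^*=\infty$ is symmetric.

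The main delicate point I anticipate is the supermartingale step in the upper bound, since $e^{-rt}\psi(X_t)$ is only a local martingale in general: rigour demands stopping along a localising sequence, applying optional stopping for the bounded stopping times, and passing to the limit via Fatou's lemma together with non-negativity of $\psi$ and the cemetery convention. Beyond this the argument reduces to algebra and the identification $\frac{g(b)}{\psi(b)}=\frac{g(z^*)}{\psi(z^*)}$ for $b\in M$.
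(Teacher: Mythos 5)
Your proof is correct and follows the same overall Beibel--Lerche strategy as the paper: the pointwise bound $g\leq\frac{g(z^*)}{\psi(z^*)}\psi$ combined with the supermartingale property of $e^{-rt}\psi(X_t)$ gives the upper bound, and Lemma~\ref{cor 5} supplies the matching lower bound via $\tau_{M^+}$. You do depart from the paper at three points, each of them an improvement in rigour or self-containedness. First, for $M\subset S$ the paper invokes Theorem~2.1 of \cite{ChrisIrle11}, whereas you simply observe that $V(b)=\frac{g(z^*)}{\psi(z^*)}\psi(b)=g(b)$ for $b\in M$; this is more elementary and at the same time furnishes the (trivial but left implicit in the paper's proof) verification that $(0,z^*)\setminus M\subset C$. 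Second, you flag the local-martingale-to-supermartingale step (localisation, optional stopping, Fatou) explicitly, which is warranted: the paper writes $\E_x\{e^{-r\tau}\psi(X_\tau)\}=\psi(x)$, an equality that is false for general $\tau$; only the inequality $\leq$ is needed, and your route is the correct way to get it. Third, for part~(C) the paper exhibits a strictly better stopping time $\tau_{(0,\hat x)}$ with $\hat x\in(0,\varepsilon)$ chosen via the $\limsup$ definition, whereas you instead feed $V\geq g$ back into the exit-time identity to force $\varepsilon\in M$, contradicting $\sup M=0$; both arguments are sound and of comparable length. One small caveat worth a half-sentence in a polished write-up: the final inequality $\frac{g(z^*)}{\psi(z^*)}\E_x[e^{-r\tau}\psi(X_\tau)]\leq\frac{g(z^*)}{\psi(z^*)}\psi(x)$ also uses $\frac{g(z^*)}{\psi(z^*)}\geq 0$, which holds because $g$ is assumed to take a positive value somewhere so that $\sup g/\psi>0$.
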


\begin{proof}
\noindent \textbf{(A)}
Let $x\leq z^*$. Then, for all a.s. finite stopping times $\tau$, we have
\begin{align*}
\E_x\left\{e^{-r{\tau}}g(X_\tau)\right\}&=\E_x\left\{e^{-r\tau  }\psi(X_\tau)\frac{g(X_\tau)}{\psi(X_\tau)}\right\}\leq \E_x\left\{e^{-r\tau  }\psi(X_\tau)\right\}\frac{g(z^*)}{\psi(z^*)}=\psi(x)\frac{g(z^*)}{\psi(z^*)}.
\end{align*}

By Lemma \ref{cor 5} this value is attained with a stopping rule $\tau_{M^+}=\inf\{t\geq0\mid X_t\in M^+\}$, where $M^+\subset M$ is any subset containing $z^*$, and thus the proposed $V(x)$ is the value function. 

Lastly, Theorem 2.1 in \cite{ChrisIrle11} says that a point $x\in\R_+$ is in the stopping set if and only if there exists a positive $r$-harmonic function $h$ such that $x\in\argmax\{g/h\}$. This implies straightaway that $M\subset S$.

\noindent \textbf{(C)} Let $M=\{0\}$ and suppose, contrary to our claim, that there exists $\varepsilon>0$ such that $\tau_{(0,\varepsilon)}$ is an optimal stopping time for $x\in(0,\varepsilon)$. Then we have 
\begin{align*}
V(x)&=\E_x\left\{e^{-r{\tau_{(0,\varepsilon)} }}g(X_{\tau_{(0,\varepsilon)}})\right\}=\frac{g(\varepsilon)}{\psi(\varepsilon)}\psi(x)
\end{align*} 
for all $x\in(0,\varepsilon)$. Since $\limsup_{z\downarrow 0}g(z)/\psi(z)$ provides the maximum for $g/\psi$, there exists $\hat{x}\in(0,\varepsilon)$ such that $g(\hat{x})/\psi(\hat{x})>g(\varepsilon)/\psi(\varepsilon)$, whence for all $x\in(0,\hat{x})$ 
\begin{align*}
\E_x\left\{e^{-r{\tau_{(0,\hat{x})}}  }g(X_{\tau_{(0,\hat{x})}})\right\}=\frac{g(\hat{x})}{\psi(\hat{x})}\psi(x)
>\frac{g(\varepsilon)}{\psi(\varepsilon)}\psi(x)=\E_x\left\{e^{-r{\tau_{(0,\varepsilon)}}  }g(X_{\tau_{(0,\varepsilon)}})\right\}=V(x)
\end{align*}
which is contradicts the maximality of $V(x)$.
\end{proof}

Interestingly, if the set $M$ or $N$ contains more than one element, then by Theorem \ref{prop 1} there are several different optimal stopping times that provide the unique value function $V(x)$. Especially, the points of $M$ (and $N$) can now be interpreted as indifference points: For $x\leq z^*$, the decision maker receives the same value irrespectively whether she uses stopping time $\tau_b$, $\tau_{M^+}$, $\tau_{\{a,b\}}$, $\tau_M$, or $\tau_{z^*}$, where $a,b\in M$ are any points for which $a\leq x\leq b$. However, it is quite clear that $\tau_M$ is the smallest (a.s.) of all these stopping times (cf. Theorem 2.4 in \cite{PesShi06}). In the sequel we wish to be unambiguous and thus we select $\tau_M$ to be our optimal stopping time on interval $(0,z^*]$ with a notion that there might also be others.

It should also be mentioned that the values $\limsup_{x\to0}\frac{g(x)}{\varphi(x)}$ and $\limsup_{x\to\infty}\frac{g(x)}{\psi(x)}$ are crucial when investigating the optimality of a stopping time $\tau_S=\inf\{t\geq0\mid X_t\in S\}$ on the whole state space. This question has been treated quite comprehensively in \cite{DayKar03, Dayanik08}. In addition, in Theorem 6.3(III) in \cite{LamZer13} it is proven that $\lim_{n\to\infty}\tau_S\land \tau_{(\alpha_n,\beta_n)}$ provides the value function, where $\alpha_n$ and $\beta_n$ are suitable chosen sequences such that $\alpha_n\to0$, $\beta_n\to\infty$ when $n\to\infty$. In our research this aspect is omitted, as the above mentioned analyses are quite exhaustive, and the present study does not bring any new insights to that subject.

\subsection{Minor results}

In short, Theorem \ref{prop 1} guarantees that if we can find the global maximum points of $g/\psi$ and $g/\varphi$, then the solution is unambiguously characterised near the boundaries. Usually the set $M$ contains only one element, but this is not always the case, as will be illustrated with examples in Section \ref{sec example} below. The theorem also gives rise to a handful of corollaries, which shall be presented in this subsection (proofs are given in the Subsection \ref{subsec corollary proofs}). These corollaries are more or less straight consequences of Theorem \ref{prop 1} (and hence of Theorem 2 from \cite{BeiLer00}), but they have not been written out explicitly before.

Let us start with the ordering of $z^*$ and $y^*$.

\begin{corollary}\label{cor 2b}
One has $z^*\leq y^*$.
\end{corollary}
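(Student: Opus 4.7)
The plan is to argue by contradiction: suppose $y^* < z^*$. By Theorem \ref{prop 1}\eqref{prop 1 A}, the value function on $(0,z^*]$ is given by $V(x) = \frac{g(z^*)}{\psi(z^*)}\psi(x)$, while Theorem \ref{prop 1}\eqref{prop 1 Ab} tells us that on $[y^*,\infty)$ the value is $V(x) = \frac{g(y^*)}{\varphi(y^*)}\varphi(x)$. Under the assumption $y^* < z^*$, both formulas are valid on the non-degenerate interval $[y^*, z^*]$, and hence must coincide there. This forces the identity
\begin{equation*}
\frac{\psi(x)}{\varphi(x)} \;=\; \frac{g(y^*)}{\varphi(y^*)}\cdot\frac{\psi(z^*)}{g(z^*)} \qquad\text{for all } x\in[y^*,z^*].
\end{equation*}

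Next I would invoke the standard fact from classical diffusion theory (see e.g.\ \cite{BorSal02}) that $\psi$ is strictly increasing and $\varphi$ is strictly decreasing, so that the ratio $\psi/\varphi$ is strictly increasing on $\R_+$. In particular it cannot be constant on any non-degenerate interval, which contradicts the displayed identity and completes the argument.

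One small point that should be addressed for completeness is that the right-hand side of the displayed identity is a well-defined positive real number. Since $g$ is assumed to attain positive values somewhere, $g/\psi$ and $g/\varphi$ have strictly positive suprema, and by upper semicontinuity those suprema are attained at $z^*$ and $y^*$ respectively; in particular $g(z^*),g(y^*)>0$, so there is no issue of dividing by zero or of the two formulas forcing the trivial relation $\psi\equiv 0\equiv\varphi$. I do not anticipate any real obstacle here: the monotonicity of $\psi/\varphi$ is the only non-trivial input, and everything else is a direct application of the already-proved Theorem \ref{prop 1}.
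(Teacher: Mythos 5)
Your proof is essentially the same as the paper's: both argue by contradiction, use Theorem \ref{prop 1} to equate the two representations of $V$ on the overlap $(y^*,z^*)$, and conclude via the strict monotonicity of $\psi/\varphi$. The only cosmetic difference is that the paper phrases the overlap via the stopping times $\tau_{y^*}$ and $\tau_{z^*}$ while you invoke the value formulas directly, and you additionally spell out why $g(z^*),g(y^*)>0$ --- a harmless extra check.
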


This means that the regions where the value is dictated by ratios $g/\psi$ and $g/\varphi$ are always separated.

Let us then present an easy, but surprisingly powerful corollary. 
\begin{corollary}\label{cor 2}
\begin{enumerate}[(A)]
	\item If $z^*>0$. Then, for all $x\leq z^*$, the value reads as in \eqref{eq value}, $\tau_M$ is the optimal stopping time and $(0,z^*)\setminus M\subset C$.\label{cor 2 A}
	\item If $y^*<\infty$. Then, for all $x\geq y^*$, the value reads as in \eqref{eq value phi}, $\tau_N$ is the optimal stopping time, and $(y^*,\infty)\setminus N\subset C$.\label{cor 2 B}
\end{enumerate}
\end{corollary}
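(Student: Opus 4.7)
The plan is to obtain Corollary \ref{cor 2} as an immediate specialisation of Theorem \ref{prop 1}. In part (A) of that theorem the stopping rule $\tau_{M^+}$ produces the value for any subset $M^+ \subset M$ that contains $z^*$; and by upper semicontinuity of $g$ one always has $z^* \in M$ (this fact was recorded right after the definitions of $M$ and $N$). Under the extra hypothesis $z^* > 0$ of the corollary the whole set $M$ itself therefore qualifies as an admissible $M^+$, so the choice $M^+ := M$ is legitimate.

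Substituting $M^+ := M$ into Theorem \ref{prop 1}(A) then delivers the three claims in one shot: the representation $V(x) = \frac{g(z^*)}{\psi(z^*)} \psi(x)$ on $(0, z^*]$, the optimality of $\tau_M$, and the inclusion $(0, z^*) \setminus M \subset C$. Part (B) is treated by the symmetric manoeuvre: $y^* < \infty$ gives $y^* \in N$, and taking $N^+ := N$ in Theorem \ref{prop 1}(B) yields the analogous three conclusions on $[y^*, \infty)$.

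I do not foresee any genuine obstacle; this is essentially a one-line deduction. The only point worth a remark is that $M$ might contain the boundary $0$ (and similarly $N$ might contain $\infty$), which cannot be used as stopping points. However, since $z^* > 0$ is assumed, the hitting time $\tau_M$ viewed from a state $x \in (0, z^*]$ coincides almost surely with a two-point hitting rule $\tau_{\{a, b\}}$ with $a, b \in M \cap (0, \infty)$ (with $a \leq x \leq b$), and this is exactly the situation already handled in Lemma \ref{cor 5}(A)(iii)--(iv) during the proof of Theorem \ref{prop 1}. Hence no additional computation is needed beyond citing the theorem.
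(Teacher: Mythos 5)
Your proposal is correct and takes essentially the same approach as the paper, whose proof consists of the single line ``Straight consequences from Theorem \ref{prop 1}''; choosing $M^+:=M$ in Theorem \ref{prop 1}(A) (legitimate since $z^*\in M$ by upper semicontinuity) and $N^+:=N$ in part (B) gives the three stated claims directly. Your closing aside is a bit imprecise---for small $x$ the rule $\tau_M$ may reduce to a one-sided hitting time $\tau_b$ rather than a genuine two-point rule with both endpoints in $M\cap(0,\infty)$, and if $0\in M$ is an exit or killing boundary the rule really is $\tau_{\{0,b\}}$ with $g(\partial)=0$---but these cases are already handled by Lemma \ref{cor 5}(A), so the deduction is unaffected.
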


The power of this corollary lies in the fact that if one can check somehow that $z^*>0$, the value is immediately characterised near the boundary $0$; For example if $g(0+)<0$ or $\frac{d}{dx}\frac{g}{\psi}(0+)>0$, then we know at once that $z^*>0$. In this way Corollary \ref{cor 2} can be used to quicken the standard Beibel-Lerche method: If $z^*>0$, we immediately know the solution without needing more closely inspection. This corollary is also related to corollaries 7.2 and 7.3 in \cite{LamZer13}, where it is shown how the value function looks like if we find out that $(0,z)\subset C$ and $z\in S$ (and analogously for $(y,\infty)\subset C$ and $y\in S$). 

In the next two corollaries we will study more closely the situations that $M$ or $N$ include $0$ or $\infty$. 
\begin{corollary}\label{cor b1}
If $z^*=0$ and $y^*=\infty$, then one of the following is true.
\begin{enumerate}[(i)]
	\item It is optimal to stop immediately, i.e. $S=\R_+$ and $g$ is $r$-excessive.
	\item The optimal stopping rule is at least two-boundary rule (i.e. there are $0<a<b<\infty$ such that $(0,a]\cup [b,\infty)\subset S$).
	\item The optimal stopping time is not finite/admissible.
\end{enumerate}
\end{corollary}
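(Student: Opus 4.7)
The plan is to argue by contradiction: assume both (i) and (iii) fail, and derive (ii). The failure of (iii) gives, via Theorem~2.7 in \cite{PesShi06}, that $\tau_S$ itself is admissible and optimal. The failure of (i) gives $S\neq\R_+$, equivalently $C\neq\emptyset$. The clause that $g$ is $r$-excessive in (i) is immediate, since $S=\R_+$ forces $V\equiv g$ and $V$ is always $r$-excessive.

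The core work is to show $(0,a]\subset S$ for some $a>0$; the analogous claim $[b,\infty)\subset S$ for some $b<\infty$ follows by the symmetric argument with $\varphi$ and $y^*=\infty$ in place of $\psi$ and $z^*=0$, and combining the two yields (ii). To establish $(0,a]\subset S$, I suppose not and set $\varepsilon^*=\sup\{\varepsilon>0\mid(0,\varepsilon)\subset C\}$. In the standard case $\varepsilon^*>0$, and since $S$ is closed in $\R_+$ we have $(0,\varepsilon^*)\subset C$ with $\varepsilon^*\in S$. Since $0$ is inaccessible as a stopping point under the boundary assumptions of Section~\ref{sec prob}, starting from any $x\in(0,\varepsilon^*)$ the hitting time $\tau_S$ coincides with the exit time $\tau_{(0,\varepsilon^*)}$ of the interval. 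Consequently $\tau_{(0,\varepsilon^*)}$ is an admissible stopping time that delivers $V(x)$ for every $x<\varepsilon^*$, which is precisely the configuration that Theorem~\ref{prop 1}(\ref{prop 1 B}) forbids when $z^*=0$. This contradiction establishes $(0,a]\subset S$.

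The delicate point I expect to require care is the remaining possibility $\varepsilon^*=0$, in which $S$ has points arbitrarily close to $0$ without containing any full interval $(0,a]$, so that components of $C$ accumulate at $0$. Here the neat identification $\tau_S=\tau_{(0,\varepsilon^*)}$ is unavailable. To rule out this configuration I would combine the limit $\lim_{x\downarrow 0}V(x)/\psi(x)=\sup_z g(z)/\psi(z)$, obtained from the Beibel--Lerche upper bound $V(x)\leq\sup_z(g/\psi)\cdot\psi(x)$ together with the lower bound $V(x)\geq (g(c)/\psi(c))\psi(x)$ valid for any $c\geq x$, with the explicit form $V(x)=A_n\psi(x)+B_n\varphi(x)$ on each component $(\alpha_n,\beta_n)\subset C$ pinned down by $V(\alpha_n)=g(\alpha_n)$ and $V(\beta_n)=g(\beta_n)$; evaluating this along $\alpha_n,\beta_n\downarrow 0$ should either contradict optimality of $\tau_S$ or force one of the endpoints to feed back into the scenario already ruled out in the previous paragraph, so in either case (ii) or (iii) ultimately holds.
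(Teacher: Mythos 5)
Your main line of attack is exactly what the paper intends: its entire proof of this corollary is the sentence ``straight consequence of Theorem~\ref{prop 1}'', i.e.\ if an admissible optimal rule exists and $S\neq\R_+$, then Theorem~\ref{prop 1}\eqref{prop 1 B} forbids a one-boundary rule $\tau_{(0,\varepsilon)}$ near $0$ (and $\tau_{(H,\infty)}$ near $\infty$), so the rule must be ``at least two-boundary''. Your treatment of the case $\varepsilon^*>0$ (identify $\tau_S$ with $\tau_{(0,\varepsilon^*)}$ on $(0,\varepsilon^*)$ and contradict Theorem~\ref{prop 1}\eqref{prop 1 B}) is a correct and in fact more careful rendering of that one-line argument, modulo routine points you implicitly use (closedness of $S$, i.e.\ lower semicontinuity of $V$, and the observation that exit through an attainable lower boundary contributes $g(\partial)=0$ to both $\tau_S$ and $\tau_{(0,\varepsilon^*)}$, so their values agree).

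The genuine gap is your last paragraph. Ruling out the configuration $\varepsilon^*=0$ (both $S$ and $C$ accumulating at $0$ without $(0,a]\subset S$) is stated only as a plan (``should either contradict optimality \dots or force \dots''), and the ingredients you name do not obviously close it. The limit $\lim_{x\downarrow0}V(x)/\psi(x)=L:=\limsup_{y\downarrow0}g(y)/\psi(y)$ is fine, and on a component $(\alpha_n,\beta_n)\subset C$ with $\alpha_n,\beta_n\downarrow0$ the identity $V=A_n\psi+B_n\varphi$ with $V(\alpha_n)=g(\alpha_n)$, $V(\beta_n)=g(\beta_n)$ is also fine; but putting these together only yields $g(\alpha_n)/\psi(\alpha_n)\to L$ and $g(\beta_n)/\psi(\beta_n)\to L$, which is perfectly consistent with $z^*=0$ (the maximum being attained only as a limsup at $0$ means precisely that near-maximizing sequences exist). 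So no contradiction with optimality of $\tau_S$ falls out of this computation, and the case remains open in your write-up. Note that this is precisely the slack between the corollary's informal phrase ``at least two-boundary rule'' and the parenthetical $(0,a]\cup[b,\infty)\subset S$: under the looser reading (the optimal rule is not a one-boundary rule near either boundary) your first two paragraphs already suffice and nothing more is needed, whereas proving the parenthetical form literally would require an argument excluding the accumulation scenario that neither you nor the paper supplies. You should either carry out that exclusion or state explicitly that you prove the corollary in the paper's weaker reading.
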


\begin{corollary}\label{cor 3}
\begin{enumerate}[(A)] 
	\item Let $z^*=\infty$. Then $V(x)=A\psi(x)$ for all $x\in\R_+$, where $A=\limsup_{x\to\infty}\left\{\frac{g(x)}{\psi(x)}\right\}$. Especially, if $A=\infty$, then $V(x)\equiv\infty$. \label{cor last B}
	\begin{enumerate}[(i)]
	\item If $M=\{z^*\}=\{\infty\}$, then $C=\R_+$ and there does not exist a finite stopping time that yields the value. 
	\item If there is at least one other element in $M$, then $C=\R_+\setminus M$ and, for all $x\leq b^*:=\sup\{M\setminus \{\infty\}\}$, the stopping time $\tau_{M\setminus\{\infty\}}$ is optimal and admissible. For $x\in(b^*,\infty)$ there does not exist an admissible stopping time that yields the value.\label{cor last Bii} \end{enumerate}
	\item Let $y^*=0$. Then $V(x)=B\varphi(x)$, where $B=\limsup_{x\to0}\left\{\frac{g(x)}{\varphi(x)}\right\}$. Especially, if $B=\infty$, then $V(x)\equiv\infty$.\label{cor last D}
	\begin{enumerate}[(i)]
	\item If $N=\{y^*\}=\{0\}$, then $C=\R_+$ and there does not exist a finite stopping time that yields the value.
	\item If $y^*=0$ and there is at least one other element in $N$, then $C=\R_+\setminus N$ and, for all $x\geq a^*:=\inf\{N\setminus \{0\}\}$, the stopping time $\tau_{N\setminus\{0\}}$ is optimal. For $x\in(0,a^*)$ there does not exist an admissible stopping time that yields the value. \label{cor last Dii}
\end{enumerate}
\end{enumerate}
\end{corollary}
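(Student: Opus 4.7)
My plan is to prove part (A); part (B) follows by symmetry, exchanging $(\psi,z^*,M)\leftrightarrow(\varphi,y^*,N)$ and swapping the roles of the two boundaries. Write $A=\limsup_{x\to\infty}g(x)/\psi(x)$; since $\infty\in M$, the convention introduced before Theorem~\ref{prop 1} gives $A=\sup_{y}g(y)/\psi(y)$, so $g\leq A\psi$ on $\R_+$. The identity $V(x)=A\psi(x)$ then follows from two sides. For the upper bound,
\[
\E_x\{e^{-r\tau}g(X_\tau)\}\leq A\,\E_x\{e^{-r\tau}\psi(X_\tau)\}\leq A\psi(x),
\]
exactly as in the opening display of the proof of Theorem~\ref{prop 1}(A), using the supermartingale property of $e^{-rt}\psi(X_t)$. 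For the lower bound, choose finite $x_n\to\infty$ with $g(x_n)/\psi(x_n)\to A$; for $x<x_n$ the hitting time $\tau_{x_n}$ is admissible and yields $(g(x_n)/\psi(x_n))\psi(x)$, so passing to the limit gives $V(x)\geq A\psi(x)$ (and $V\equiv\infty$ if $A=\infty$).

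For (A)(i), where $M=\{\infty\}$, assume an a.s.\ finite $\tau^*$ attained $V(x)$. Saturating the chain above forces $g(X_{\tau^*})=A\psi(X_{\tau^*})$ a.s., i.e.\ $X_{\tau^*}\in M$; but a.s.\ finiteness places $X_{\tau^*}$ in the interior $(0,\infty)$, contradicting $M=\{\infty\}$. The same pointwise argument shows $g(x)<V(x)$ at every interior $x$, whence $C=\R_+$.

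For (A)(ii), set $b^*=\sup(M\setminus\{\infty\})$ and first take $x\leq b^*$. The computation in Lemma~\ref{cor 5}(A)(iii) used only the identity $g(a)/\psi(a)=g(b)/\psi(b)$, so it applies verbatim to any $a,b\in M\setminus\{\infty\}$ with $a\leq x\leq b$ and yields value $A\psi(x)$; by continuity of $X$ the value extends to $\tau_{M\setminus\{\infty\}}$, which is admissible since $X$ hits any interior point of $M\setminus\{\infty\}$. For $x>b^*$, suppose an admissible $\tau^*$ were optimal. As in (i), $X_{\tau^*}\in M\setminus\{\infty\}\subset(0,b^*]$ a.s., so continuity forces $\tau^*\geq\tau_{b^*}$, and the strong Markov property applied at $\tau_{b^*}$ combined with $V(b^*)=A\psi(b^*)$ yields
\[
V(x)\leq V(b^*)\,\E_x\{e^{-r\tau_{b^*}}\}=A\psi(b^*)\frac{\varphi(x)}{\varphi(b^*)}<A\psi(x)=V(x),
\]
the strict inequality being the classical strict monotonicity of $\psi/\varphi$. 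The main technicality I expect is the case where $b^*$ is not itself attained in $M$: one then selects $b_n\in M\setminus\{\infty\}$ with $b_n\nearrow b^*$, applies the argument at each $b_n$, and passes to the limit using continuity of $\psi$ and $\varphi$ on the interior, which is routine.
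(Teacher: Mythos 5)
Your proof is correct and follows the same underlying strategy as the paper's, but you supply several steps that the paper leaves implicit or states too tersely, and in one place you actually repair a small gap in the paper's own argument.

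For the identity $V=A\psi$, the paper states that $A\psi$ is an $r$-excessive majorant and then approximates from below with an increasing sequence $\tau_{z_n}$; your supermartingale chain plus the same limiting sequence is the same argument written out. Where you genuinely add value is in parts (i) and (ii). For (i) the paper's justification is only the one-line remark that $\infty$ cannot be used as a stopping point; your saturation argument --- equality in $\E_x\{e^{-r\tau^*}g(X_{\tau^*})\}\le A\,\E_x\{e^{-r\tau^*}\psi(X_{\tau^*})\}\le A\psi(x)$ forces $X_{\tau^*}\in M$ a.s., impossible when $M=\{\infty\}$ and $\tau^*<\infty$ a.s. --- is the rigorous version of that remark, and the same pointwise comparison gives $C=\R_+$. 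For (ii) the paper cites Theorem~\ref{prop 1} to justify optimality of $\tau_{M\setminus\{\infty\}}$, but that theorem (through Lemma~\ref{cor 5}) is stated for $M^+\ni z^*$ with $z^*<\infty$, so the citation is not literally applicable when $z^*=\infty$; your observation that the explicit computation in Lemma~\ref{cor 5}(A)(iii) only uses $g(a)/\psi(a)=g(b)/\psi(b)$ for finite $a,b\in M$, and hence transfers verbatim, is exactly what is needed. Your strong-Markov argument for the non-existence of an admissible optimizer when $x>b^*$ is also more than the paper gives (the paper just says ``follows from (i)''), and it is correct; the strict inequality follows from the strict monotonicity of $\psi/\varphi$. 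One minor remark: the technicality you flag at the end (that $b^*$ might not be attained in $M$) cannot actually occur when $b^*<\infty$, since upper semicontinuity of $g/\psi$ forces $b^*\in M$; if $b^*=\infty$ the ``$x>b^*$'' clause is vacuous. So that caveat is unnecessary, but harmless.
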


We observe that the condition $z^*=\infty$ alone is not enough to guarantee that a stopping region is an empty set nor that a value cannot be attained with an admissible stopping time. Indeed, we will illustrate these cases below in Section \ref{sec example}. It is worth mentioning that the sufficient conditions in Corollary \ref{cor 3} for the value function to be infinite are known results (e.g. Theorem 1 in \cite{BeiLer00}, Theorem 6.3(I) in \cite{LamZer13}, Proposition 5.10 in \cite{DayKar03}).

Let us then show that if $M$ (or $N$) includes an interval, then $g$ must equal to a fundamental solution on it.

\begin{lemma}\label{lemma 2}
\begin{enumerate}[(A)]
	\item If an interval $(a,b)\subset M$, then $g(x)=K \psi(x)$ for all $x\in(a,b)$ for some $K>0$.
	\item If an interval $(a,b)\subset N$, then $g(x)=K \varphi(x)$ for all $x\in(a,b)$ for some $K>0$.
\end{enumerate}
 \end{lemma}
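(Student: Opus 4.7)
The plan is essentially to unwind the definition of $M$. By construction, every point $x\in M$ attains the global maximum of $g/\psi$, so $g(x)/\psi(x)=\sup_{y}g(y)/\psi(y)=:K$ for every $x\in M$. Hence if $(a,b)\subset M$, the ratio $g/\psi$ is constant on $(a,b)$ with value $K$, and multiplying through by $\psi(x)>0$ gives $g(x)=K\psi(x)$ for every $x\in(a,b)$.

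For the strict positivity $K>0$, I would invoke the standing assumption on the reward function made in Section~\ref{sec prob}, namely that $g$ attains positive values for some point in $\R_+$. Since $\psi>0$ on $\R_+$, there is a point $x_0$ with $g(x_0)/\psi(x_0)>0$, and therefore $K=\sup_y g(y)/\psi(y)\geq g(x_0)/\psi(x_0)>0$. (If one worries about the supremum being realised only at a boundary, the definition $g(0)/\psi(0)=\limsup_{x\to 0}g(x)/\psi(x)$ and its analogue at $\infty$ that is set up in Section~\ref{sec prob} together with $z^*\in M$ ensures $K$ is a genuine value.)

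Part (B) is handled in exactly the same way, replacing $\psi$ by $\varphi$ throughout: every $x\in N$ satisfies $g(x)/\varphi(x)=\sup_y g(y)/\varphi(y)=:K$, so $(a,b)\subset N$ forces $g(x)=K\varphi(x)$ on $(a,b)$, and $K>0$ by the same positivity argument.

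There is no real obstacle here; the statement is essentially a definitional unpacking of what it means for an interval to consist entirely of global maximisers of $g/\psi$ (respectively $g/\varphi$). The only point that needs a line of justification is the strict positivity of $K$, which is what pins down the conclusion as a genuine proportionality to a fundamental solution rather than the trivial identity $g\equiv 0$.
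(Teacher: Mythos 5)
Your argument is correct and is essentially the same as the paper's: one simply notes that every point of $(a,b)$ attains the global supremum of $g/\psi$, so the ratio is a constant $K$ on the interval, and then multiplies by $\psi$. The only addition you make is to spell out why $K>0$ (using the standing assumption that $g$ is positive somewhere), which the paper leaves implicit.
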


\begin{proof}
\noindent \textbf{(A)} Since $(a,b)\subset M$ we know that $x=\argmax\{g(y)/\psi(y)\}$ for all $x\in(a,b)$. This in turns means that $\frac{g(x)}{\psi(x)}=K$ for some $K>0$ on $(a,b)$. 
\end{proof}

To end the subsection, we will show that using the ratios $g/\psi$ and $g/\varphi$ we can also say something about the continuation region outside the set $(0,z^*)\cup(y^*,\infty)$.

\begin{lemma}\label{lemma increasing}
\begin{enumerate}[(A)]
	\item Denote by $C_\psi:=\left\{x\mid \frac{g(x)}{\psi(x)} \text{ is strictly increasing }\right\}$. Then $C_\psi\subset C$.
	\item Denote by $C_\varphi:=\left\{x\mid \frac{g(x)}{\varphi(x)} \text{ is strictly decreasing }\right\}$. Then $C_\varphi\subset C$.
\end{enumerate}
\end{lemma}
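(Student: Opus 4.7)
The plan is to apply the hitting-time representation recalled in the introduction together with the local strict monotonicity hypothesis. The intuition is simple: if $g/\psi$ is strictly increasing near $x_0$, then aiming the diffusion at a point slightly to the right of $x_0$ strictly beats stopping at $x_0$. So we construct an admissible stopping time witnessing $V(x_0)>g(x_0)$.

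For part (A), I would proceed as follows. Fix $x_0\in C_\psi$. I interpret the set $C_\psi$ as the set of points in whose neighbourhood $g/\psi$ is strictly increasing, so there exists $\delta>0$ such that $g/\psi$ is strictly increasing on $(x_0-\delta,x_0+\delta)$; in particular, for any $z\in(x_0,x_0+\delta)$ we have
\begin{align*}
\frac{g(z)}{\psi(z)}>\frac{g(x_0)}{\psi(x_0)}.
\end{align*}
The hitting time $\tau_z=\inf\{t\geq 0\mid X_t=z\}$ is admissible, and by the formula recalled in the introduction (cf.\ II.10 in \cite{BorSal02})
\begin{align*}
\E_{x_0}\left\{e^{-r\tau_z}g(X_{\tau_z})\right\}=\frac{g(z)}{\psi(z)}\psi(x_0).
\end{align*}
Since $\psi(x_0)>0$, combining the two displays gives
\begin{align*}
V(x_0)\geq\E_{x_0}\left\{e^{-r\tau_z}g(X_{\tau_z})\right\}=\frac{g(z)}{\psi(z)}\psi(x_0)>\frac{g(x_0)}{\psi(x_0)}\psi(x_0)=g(x_0),
\end{align*}
which is precisely $x_0\in C$.

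Part (B) follows by the symmetric argument: given $x_0\in C_\varphi$, the local strict decrease of $g/\varphi$ produces $y\in(x_0-\delta,x_0)$ with $g(y)/\varphi(y)>g(x_0)/\varphi(x_0)$, and the hitting-time formula for $\tau_y$ on $\{x\geq y\}$ yields $V(x_0)\geq \frac{g(y)}{\varphi(y)}\varphi(x_0)>g(x_0)$. I do not expect any real obstacle: the whole statement is a one-line consequence of the hitting-time representation, provided the phrase ``strictly increasing'' is read in the natural local sense that supplies a witness $z>x_0$ (resp.\ $y<x_0$) with strictly larger ratio. Upper semicontinuity of $g$ and the positivity and continuity of the fundamental solutions are used only implicitly, through the validity of the hitting-time formula for any interior point $z$ (resp.\ $y$) of the state space.
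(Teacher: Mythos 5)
Your argument is correct and coincides exactly with the paper's proof: both interpret membership in $C_\psi$ as supplying a witness $z>x_0$ with strictly larger ratio $g/\psi$, invoke the hitting-time representation $\E_{x_0}\{e^{-r\tau_z}g(X_{\tau_z})\}=\frac{g(z)}{\psi(z)}\psi(x_0)$, and conclude $V(x_0)>g(x_0)$. Part (B) is treated symmetrically in the paper as well.
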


\begin{proof}
\noindent\textbf{(A)} Let $x\in C_\psi$. Since $g/\psi$ is strictly increasing at $x$, we can choose $z>x$ such that $g(z)/\psi(z)>g(x)/\psi(x)$. Now, using a stopping time $\tau_z$, we get
\[V(x)\geq \E_x\left\{e^{-r{\tau_z}  }g(X_{\tau_x})\right\}=\frac{g(z)}{\psi(z)}\psi(x)>\frac{g(x)}{\psi(x)}\psi(x)=g(x)\]
and so $x\in C$.
\end{proof}

Consider a reverse result: An interval $(a,b)$ is subset of $C$, with $a,b\in S$, only if $g/\psi$ is increasing or $g/\varphi$ is decreasing for some $x\in(a,b)$. If this would be true, then one could try to identify all the continuation regions using only ratios $g/\psi$ and $g/\varphi$. However, it is possible to construct an example where $C\neq \emptyset$ and $g/\psi$ and $g/\varphi$ are decreasing and increasing, respectively, everywhere. In other words this reverse result is not true, and thus this approach cannot be used to get an alternative method for identifying continuation and stopping regions.

\subsection{Extensions}

\subsubsection{Involving an integral}
The problem setting can easily be extended to contain also an integral term. To that end, let $\pi:\R_+\to\R$ be a measurable function such that $\E_x\left\{\int_0^\infty e^{-rs}|\pi(X_s)|ds\right\}<\infty$, and let us consider a problem 
\begin{align*}
\sup_\tau\E_x\left\{\int_0^\tau e^{-rs}\pi(X_s)ds+e^{-r\tau}g(X_\tau)\right\}.
\end{align*}
Using a strong Markov property, this can be rewritten as (cf. (1.13) in \cite{LamZer13})
\begin{align*}
(R_r\pi)(x)+\sup_\tau\E_x\left\{e^{-r\tau}\left[g(X_\tau)-(R_r\pi)(X_\tau)\right]\right\},
\end{align*}
where $(R_r\pi)(X_t)=\E_x\left\{\int_0^\infty e^{-rs}\pi(X_s)ds\right\}$ is the resolvent of $\pi$, or a cumulative net present value of $\pi$. We see at once that all the results above hold for this problem with obvious changes and with the sets $M=\left\{x\mid x=\argmax\{\frac{g(z)-(R_r\pi)(z)}{\psi(z)}\}\right\}$ and $N=\left\{x\mid x=\argmax\{\frac{g(y)-(R_r\pi)(y)}{\varphi(y)}\}\right\}$.

\subsubsection{State dependent discounting}

Another fairly straightforward extension can be made by introducing a discounting function: Instead of a constant $r>0$, we can define $r:\R_+\to \R_+$ to be a continuous function, which is bounded away from zero, whence $\int_0^t r(X_s)ds$ is the cumulative discounting from now until a time $t$. As the ODE $\left(\mathcal{A}-r(x)\right)u(x)=0$ has an increasing $\psi_r$ and a decreasing $\varphi_r$ as its two independent solutions, we see that all the previous results hold in this case.

\subsubsection{Other boundaries}

Assume that the boundaries are either absorbing or reflecting and that $g$ is extended to be defined also on these boundaries.

For a reflecting boundary Lemma \ref{cor 5}, Theorem \ref{prop 1}\eqref{prop 1 A},\eqref{prop 1 Ab} and Corollary \ref{cor 3} can quite easily seen to hold as well as Lemmas \ref{lemma 2} and \ref{lemma increasing}. However, as the boundaries can now be used as stopping points, Theorem \ref{prop 1}\eqref{prop 1 B} is no longer valid (for an counterexample, see Example 7 in Section \ref{sec example} below), nor are its corollaries.

The process is trapped in an absorbing boundary once it hits there, and so we have to take into account the positiveness of the reward function at the boundaries. If $g(0),g(\infty)\leq 0$, it is not worthwhile to stop at the boundaries. Consequently, in this case, absorbing boundaries behaves as killing ones and all the preceding results are valid with absorbing boundaries.

However, if $g(0)$ or $g(\infty)$ is positive with absorbing boundaries, then the boundary points are always stopping points. This influences the analysis and consequently, of the introduced results, only Lemmas \ref{lemma 2} and \ref{lemma increasing} hold true in the present formulation. (For a formulation of a value function near absorbing boundaries, see also Corollaries 7.2 and 7.3 in \cite{LamZer13}.)

\subsection{Proofs to the corollaries}\label{subsec corollary proofs}

\begin{proof}[Proof of Corollary \ref{cor 2b}]
Suppose, contrary to our claim, that $y^*<z^*$. Then for all $x\in(y^*,z^*)$, by Theorem \ref{prop 1}, both $\tau_{y^*}$ and $\tau_{z^*}$ would give the value, i.e.
\begin{align*}
\frac{g(y^*)}{\varphi(y^*)}\varphi(x)=\E_x\left\{e^{-r{\tau_{y^*}}  }g(X_{\tau_{y^*}})\right\}=V(x)=\E_x\left\{e^{-r{\tau_{z^*}}  }g(X_{\tau_{z^*}})\right\}=\frac{g(z^*)}{\psi(z^*)}\psi(x).
\end{align*}
From this we get 
\[\frac{\psi(x)}{\varphi(x)}=\frac{g(y^*)}{g(z^*)}\frac{\psi(z^*)}{\varphi(y^*)}.\]
But as $\psi$ is increasing and $\varphi$ is decreasing, the ratio $\psi/\varphi$ cannot be constant for all $x\in(y^*,z^*)$.
\end{proof}

\begin{proof}[Proofs of Corollary \ref{cor 2} and \ref{cor b1}] Straight consequences from Theorem \ref{prop 1}.
\end{proof}

\begin{proof}[Proof of Corollary \ref{cor 3}]
\noindent \textbf{(A)} Since $z^*=\infty$, we have $A=\limsup_{x\to\infty}\frac{g(x)}{\psi(x)}=\sup\left\{\frac{g(x)}{\psi(x)}\right\}$. Clearly $V_A(x)=A\psi(x)$ is an $r$-excessive majorant of $g$, and consequently a candidate for the value.

Let us create an increasing sequence $z_n$ such that $\lim_{n\to\infty}z_n=z^*=\infty$, that the sequence $\frac{g(z_n)}{\psi(z_n)}$ is increasing, and that $\lim_{n\to\infty}\frac{g(z_n)}{\psi(z_n)}=A$. Fix $x\in\R_+$ and let $n_x\in \N$ be such that $z_n>x$ for all $n>n_x$. Then the sequence of values
\begin{align*}
V_n(x)=\E_x\left\{e^{-r{\tau_{z_n}}  }g(X_{\tau_n})\right\}=\frac{g(z_n)}{\psi(z_n)}\psi(x)
\end{align*}
is increasing for all $n>n_x$. Moreover, for each $\varepsilon>0$ there exists $n_\varepsilon\in\N$ such that \[V_A(x)-V_n(x)=\left(A-\frac{g(z_n)}{\psi(z_n)}\right)\psi(x)<\varepsilon,\quad \text{ for all } n>n_\varepsilon.\] 
Therefore $V_A(x)=\lim_{n\to\infty}V_n(x)$ is the limit of the increasing sequence of the values of the admissible stopping times $\tau_{z_n}$, and thus it is the value. 

\noindent \textbf{(A) (i)} The value reads as $A\psi(x)$, but as $\infty$ cannot be used as a stopping point, this value cannot be reached by any finite stopping time. 

\noindent \textbf{(A) (ii)} By Theorem \ref{prop 1}, for all $x\in(0,b^*)$, $\tau_{M\setminus\{\infty\}}$ provides the value. The rest follows from part (i).
\end{proof}

\section{Examples}\label{sec example}
Let us illustrate the results with a geometric Brownian motion on $\R_+$. Now $X_t$ satisfies the stochastic differential equation
\[dX_t=\mu X_tdt+\sigma X_tdW_t,\]
where $\mu\in \R$ and $\sigma>0$, and the boundaries are natural. The fundamental solutions are $\psi(x)=x^{\gamma^+}$ and $\varphi(x)=x^{\gamma^-}$, where $\gamma^+$ is the positive root and $\gamma^-$ the negative root of the characteristic equation
\[\frac{1}{2}\sigma^2\gamma(\gamma-1)+\mu\gamma-r=0.\]
We shall demonstrate the results of Section \ref{sec main} numerically by choosing $\mu=0.1$, $\sigma=0.2$ and $r=0.24$, so that $\psi(x)=x^2$ and $\varphi(x)=x^{-6}$.

\begin{example}
We will present a simple example, where $M$ have more than one point so that there are multiple different optimal stopping rules. By Theorem \ref{prop 1} this is true, if $\frac{g}{\psi}$ have at least two maximum points. To get such a function, choose 
\begin{align*}
g_1(x)=\begin{cases}
x-3,\quad&x\leq 10\\
a_1x-b_1,\quad &x> 10,
\end{cases}
\end{align*}
where $a_1=2\frac{1}{3}$ and $b_1= 16\frac{1}{3}$ are chosen so that $g_1$ is continuous and that $M$ contains two points. With these choices $M=\{6,14\}$, $\sup\left\{{g(x)}/{\psi(x)}\right\}=\frac{1}{12}$ and $V(x)=\frac{1}{12}\psi(x)$ for all $x\leq 14$ (see Figure \ref{fig 1}). For $x<6$, the usually accepted optimal stopping time is $\tau_6=\inf\{t\geq0\mid g(x)=V(x)\}$, but also $\tau_{14}$ is optimal despite the fact that $\tau_{14}>\tau_6$ a.s.

\begin{figure}[!ht]
\begin{center}
\includegraphics[width=0.45\textwidth]{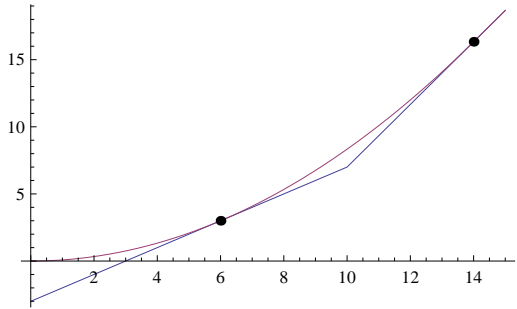}
\caption{\small The functions $V_1$ and $g_1$. The points of $M$ are denoted by black dots. The point $6$ is a singleton stopping point, and it is an indifference point, where the value function coincides with the reward function. }\label{fig 1}
\end{center}
\end{figure}
\end{example}

\begin{example} Extending the previous example, we now show that $M$ can be uncountable, so that there are uncountably many different optimal stopping rules. This is possible (by Theorem \ref{prop 1}) if $M$ contains an interval, which by Lemma \ref{lemma 2} means that $g(x)=K\psi(x)$ for some $K$ on some interval. To that end, choose 
\begin{align*}
g_2(x)=\begin{cases}
x-3,\quad &x\leq 6\\
\frac{1}{12}x^2,\quad &6< x\leq 10\\
1\frac{2}{3}x-8\frac{1}{3},\quad& 10<x\leq 14\\
3x-27,\quad &x> 14.
\end{cases}
\end{align*}
With this choice, we can calculate that $\sup\{g(x)/\psi(x)\}=\frac{1}{12}$, and it is reached at the points $[6,10]\cup \{18\}=M$, so that $V(x)=\frac{1}{12}\psi(x)$ for all $x\leq 18$ (see Figure \ref{fig 2}). Further, by Theorem \ref{prop 1} for $x<18$ any stopping time $\tau_{\{a,18\}}$, $a\in[6,10]$ is optimal.

\begin{figure}[!ht]
\begin{center}
\begin{subfigure}[b]{0.45\textwidth}
\begin{center}
\caption{}
\includegraphics[width=\textwidth]{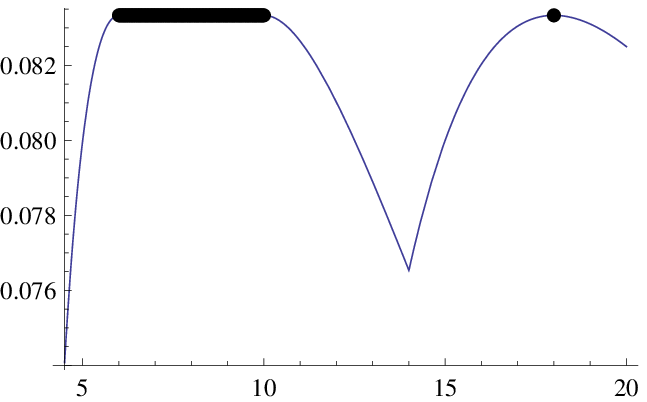}
\end{center}
\end{subfigure}
\begin{subfigure}[b]{0.45\textwidth}
\begin{center}
\caption{}
\includegraphics[width=\textwidth]{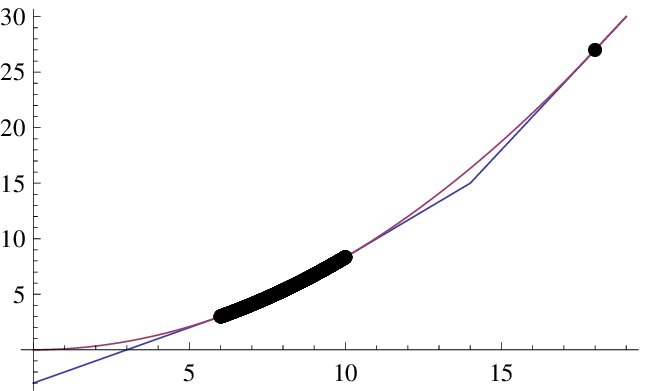}
\end{center}
\end{subfigure}
\caption{\small (A) The function $g_2/\psi$. (B) The functions $V_2$ and $g_2$. The points of $M$ are denoted by black dots. Now $g_2$ coincides with $\frac{1}{12}\psi$ on $(6,10)$, and thus this whole interval maximizes $g_2/\psi$ and can be interpreted as an indifference region for the decision maker.}\label{fig 2}
\end{center}
\end{figure}
\end{example}

\begin{example} In this example we consider a case illustrating that $M$ can include $\infty$, and that the value function is not attainable although the stopping set $S\neq\emptyset$. Choose
\begin{align*}
g_3(x)=
\begin{cases}
x-3,\quad&x<12\\
\frac{1}{12}x^{2-\frac{1}{x^3}}-b_3x^{-5},\quad &x\geq12,
\end{cases}
\end{align*}
where $b_3\approx 742\,205$ is chosen so that $g_3$ is continuous. Now we see that $g_3(x)/\psi(x)\to \frac{1}{12}$ as $x\to\infty$, and so $M=\{6,\infty\}$, and that the value function $V_3(x)=\frac{1}{12}\psi(x)$, for all $x\in\R_+$, exists finitely. However, for $x>6$ there is no admissible stopping time that provides this value (see Figure \ref{fig 3A}).  Notice that for $x\leq 6$, $\tau_6$ is an optimal stopping time. Worth observing is that $g_3$ does not satisfy the integrability condition $\E_x\left\{\sup_s \left\{e^{-rs}g_3(X_s)\right\}\right\}<\infty$, a sufficient condition for the existence of a finite solution. For a similar example, see Example 8.2 in \cite{LamZer13}.

\begin{figure}[ht!]
\begin{center}
\begin{subfigure}[b]{0.45\textwidth}
\begin{center}
\caption{}\label{fig 3A}
\includegraphics[width=\textwidth]{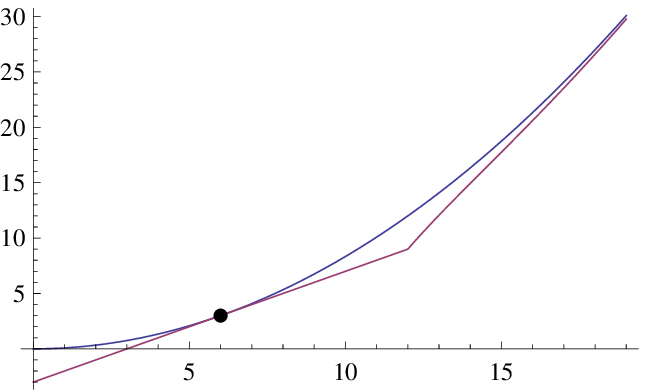}
\end{center}
\end{subfigure}
\begin{subfigure}[b]{0.45\textwidth}
\begin{center}
\caption{}\label{fig 3B}
\includegraphics[width=\textwidth]{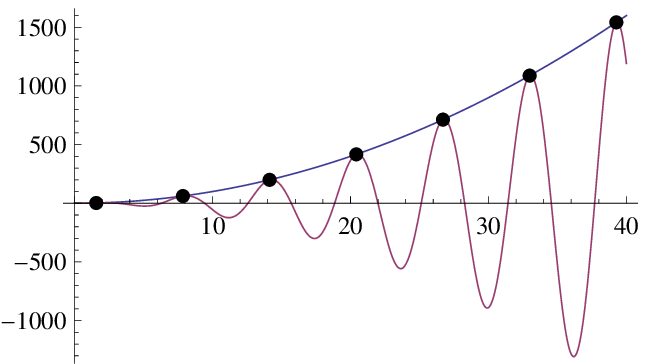}
\end{center}
\end{subfigure}
\caption{\small (A) The functions $V_3$ and $g_3$. (B) The functions $V_4$ and $g_4$. All the stopping points are denoted by black dots. In (A), the only finite stopping point is 6 and in (B) there are infinite amount of stopping points.}\label{fig 3}
\end{center}
\end{figure}
\end{example}

\begin{example}\label{ex sin} In this example we characterise a situation where $z^*=\infty$ and the value is nevertheless attained with a \emph{finite stopping time} for all $x\in \R_+$. To that end, let us choose
\begin{align*}
g_4(x)=x^2\text{sin}(x).
\end{align*}
Now clearly $g_4(x)/\psi(x)=\text{sin}(x)$, and thus $M=\{\frac{\pi}{2}+k2\pi\}_{k\in\N}$, and $z^*=\infty$. The value reads as $V_4(x)=\psi(x)$ for all $x\in \R_+$ and it is attained with a stopping time $\tau_M$, which is finite a.s. (see Figure \ref{fig 3B}). In fact, by Theorem \ref{prop 1}, for each $x$ there are countably many finite stopping times that provide the value $V_4$. Notice that the value is attainable, although, like in the previous example, the integrability condition $\E_x\left\{\sup_s \left\{e^{-rs}g_3(X_s)\right\}\right\}<\infty$ does not hold in this one either.
\end{example}

\begin{example}\label{ex pf} From Corollary \ref{cor 2b} we know that $z^*\leq y^*$. Let us now illustrate that $z^*$ can equal to $y^*$. For that end, let $g_{\ref{ex pf}}(x)=\min\{\psi(x),\varphi(x)\}$. Now $V_{\ref{ex pf}}(x)=g_{\ref{ex pf}}(x)$, $M=(0,1]$, $N=[1,\infty)$, and consequently $z^*=1=y^*$.
\end{example}

\begin{example}\label{ex 6} In this example we show that if a boundary point can be used as a stopping point, previous results do not necessarily hold. Let the state space be $[1,\infty)$ and let $1$ be a reflecting boundary. Let
\begin{align*}
g_{\ref{ex 6}}(x)=\begin{cases}
1,\quad &x=1\\
0,\quad & x\in(1,5)\\
1,\quad &x\geq 5.
\end{cases}
\end{align*}
Then $g_{\ref{ex 6}}$ is an upper semicontinuous function, $M=\{1\}$, and $z^*=1$. It can be easily seen that $\tau_{(1,5)}=\inf\{t\geq0\mid X_t\notin (1,5)\}$ is an optimal stopping time. This differs from Theorem \ref{prop 1}\eqref{prop 1 B}, which says that $\tau_{(1,1+\varepsilon)}$ cannot provide optimal value for any $\varepsilon>0$ for unattainable boundaries. Hence we see that as soon as a boundary can be used as a stopping point, Theorem \ref{prop 1}\eqref{prop 1 B} does not necessarily hold.
\end{example}

\begin{example}\label{ex 7} Here we will study more closely the subtle issue about a continuation region and an optimal stopping time. Assume that $z^*=0$. Then by Theorem \ref{prop 1}\eqref{prop 1 B} we know that $\tau_{(0,\varepsilon)}$ is not an optimal stopping time for any $\varepsilon>0$. From this one could wrongly conclude that $(0,\varepsilon)\subset S$ for some $\varepsilon>0$. However, contrary to this belief, we will illustrate that for some $\varepsilon>0$ we can actually have $(0,\varepsilon)\subset C$, in spite of the fact that $\tau_{(0,\varepsilon)}$ is not an optimal stopping time. 

Let
\begin{align*}
g_{\ref{ex 7}}(x)=
\begin{cases}
x^{-6+x},\quad& x<1\\
x^{-1},\quad & x\geq1,
\end{cases}
\end{align*}
where we have chosen $g_{\ref{ex 7}}$ so that $\lim_{a\to0}\frac{g_{\ref{ex 7}}(a)}{\varphi(a)}>0$. It is easy to check that $M=\{0\}$ and $N=\{\infty\}$ implying that $z^*=0$ and $y^*=\infty$. Then by Theorem \ref{prop 1}\eqref{prop 1 B} we know that $\tau_{(0,\varepsilon)}$ is not an optimal stopping time for any $\varepsilon>0$. However, it can be proven that (cf. Theorem 6.3(III) and Corollary 7.2 in \cite{LamZer13}), for $x\leq 0.1$ say, 
\begin{align*}
V_{\ref{ex 7}}(x)&=\sup_b\lim_{n\to\infty}\E_x\left\{e^{-r\left(\tau_{(\frac{1}{n},b)}\right)}g_{\ref{ex 7}}\left(X_{\tau_{(\frac{1}{n},b)}}\right)\right\}\\
&=\sup_b\left\{\frac{g_{\ref{ex 7}}(0)}{\varphi(0)}\varphi(x)+\left(g_{\ref{ex 7}}(b)-\frac{g_{\ref{ex 7}}(0)}{\varphi(0)}\varphi(b)\right)\psi(x)\right\},
\end{align*}
which is maximized with $b^*\approx 1.22$. Thus $V_{\ref{ex 7}}(0.1)\approx 10^6>794\,328=g_{\ref{ex 7}}(0.1)$. In other words, for $x<b^*$ we have $V_{\ref{ex 7}}(x)>g_{\ref{ex 7}}(x)$ so that $(0,b^*)\subset C$. Moreover, the time $T_\infty:=\lim_{n\to\infty}\tau_{(\frac{1}{n},b^*)}$ provides the value $V(x)$, but now $T_\infty$ is not an admissible stopping time.

We could have also used Lemma \ref{lemma increasing} to prove the point: Now $\frac{g_{\ref{ex 7}}(x)}{\varphi(x)}$ is strictly decreasing on $(0,0.37)$, meaning that $(0,0.37)\subset C$.
\end{example}

\begin{example}\label{ex 7+1}
In all the preceding examples we have had $V'(b)=g'(b)$ for all $b\in M$. Here we will show that actually we do not need smooth fit in order to apply Theorem \ref{prop 1}. To that end, let us take 
\begin{align*}
g_{\ref{ex 7+1}}(x)=\lfloor x \rfloor^2
\end{align*}
to be a step function, which is an upper semicontinuous function. In this case, $M=\{1,2,3,\ldots\}$, $z^*=\infty$ and $V_{\ref{ex 7+1}}(x)=\psi(x)$. Moreover, now $V_{\ref{ex 7+1}}(b)=g_{\ref{ex 7+1}}(b)$ for all $b\in M$, but
unlike in the preceding examples the smooth fit does not hold in this case. See Figure \ref{fig 7+1} for an illustration. 

\begin{figure}[!ht]
\begin{center}
\includegraphics[width=0.45\textwidth]{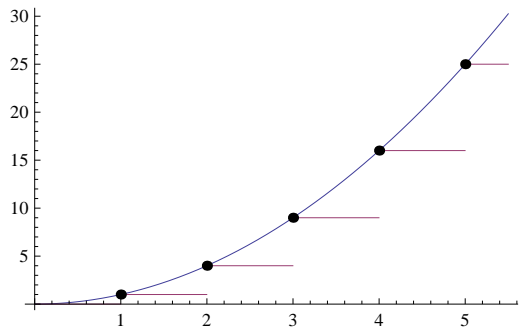}
\caption{\small The functions $V_{\ref{ex 7+1}}$ and $g_{\ref{ex 7+1}}$. The points of $M$ are denoted by black dots. }\label{fig 7+1}
\end{center}
\end{figure}
\end{example}

\section{Singular control problem}\label{sec control}

In this section we shall show that the principles behind Beibel-Lerche -method can be used in a singular control situation resulting in conclusions similar to the ones in the previous sections. Moreover, this is the first time, that the author is aware of, that the Beibel-Lerche -method is used in a singular control situation. 

\subsection{Definitions}
Let us now consider a controlled diffusion 
\begin{align*}
dX^Z_t=\mu(X_t^Z)dt + \sigma(X_t^Z)dW_T-dZ_t,\quad X^Z_0=X_0=x,
\end{align*}
on $\R_+$, where $\mu$ and $\sigma$ are as previously and $Z_t$ is non-negative, non-decreasing, right-continuous, and $\mathcal{F}_t$-adapted process. Consequently any admissible control has finite variation. An arbitrary control can be composed as $Z_t=Z^C_t+\sum_{0\leq s\leq t}\Delta Z_{s}$, where $Z^C_t$ is the continuous part and $\Delta Z_{s}=Z_{s+}- Z_{s-}$ is the jump part of the control at time $s$.

Let $g:\R_+\to\R$ be as previously and let us study a singular control problem
\begin{align}\label{eq control problem}
W_Z(x)=\sup_Z\E_x\left\{\int_0^\infty e^{-rs}g(X^Z_s) \circ dZ_s\right\}.
\end{align}
Here we understand (in the spirit of e.g. \cite{JaJoZe08})
\begin{align*}
\int_0^T e^{-rs}g(X^Z_s) \circ dZ_s=\int_0^T e^{-rs}g(X_s^Z)dZ^C_t+\sum_{0\leq t \leq T}\int_0^{\Delta Z_t}e^{-rt}g(X_t^Z-u)du.
\end{align*}

If $X^Z_t$ exits the state space at time $\zeta$, we understand $g\equiv 0$ for all $t\geq\zeta$. In this problem setting, we understand an optimal control to be any control $Z^*$ that produces the value $W_Z$. We denote by $C\subset\R_+$ and $S=\R_+\setminus C$ the inaction region and action region, respectively, of an optimal control. Furthermore, we denote by ${M'}:=\left\{x\mid x=\argmax\{g(z)/\psi'(z)\}\right\}$ the set of global maximum points of $g/\psi'$ and let ${z'}^*=\sup\{{M'}\}$ be the maximal element of that set. Moreover, we denote by $Z^b$ the control of reflecting downwards at the threshold $b$. It is known that, for $x<b$, the value applying $Z^b$ can be written as \(\frac{g(b)}{\psi'(b)}\psi(x)\) (see e.g. discussion below Lemma 3.1 in \cite{Matomaki12}).

To end this introductory subsection, let us present an auxiliary lemma, which is a consequence of It\^o's Lemma (cf. Lemma 3.1 from \cite{Matomaki12}). 

\begin{lemma}\label{lemma cont value}
Let $f:\R_+\to\R$ be a twice continuously differentiable function such that $\sigma(x)f'(x)$ is bounded on $(0,\varepsilon)$ for some $\varepsilon>0$. Let $Z_t$ be an arbitrary admissible control such that $X^Z_t$ is bounded. Then
\begin{align*}
f(x)=\E_x\left\{\int_0^\infty e^{-rs}\left(r-\mathcal{A}\right)f(X^Z_s)ds+\int_0^\infty e^{-rs}f'(X^Z_s)dZ^C_s+\sum_{0\leq s} e^{-rs}\Delta f(X^Z_s) \right\}.
\end{align*}
\end{lemma}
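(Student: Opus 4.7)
The plan is to derive the identity from a direct application of the generalized It\^o formula to the discounted process $e^{-rt}f(X^Z_t)$, followed by taking expectations and letting $t\to\infty$. Under the stated assumptions, $X^Z$ is a c\`adl\`ag semimartingale whose continuous martingale part has quadratic variation $\int_0^\cdot\sigma^2(X^Z_s)ds$ and whose jumps satisfy $\Delta X^Z_s=-\Delta Z_s$ at exactly the jump times of $Z$.

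Applying It\^o's formula for c\`adl\`ag semimartingales to $e^{-rt}f(X^Z_t)$ and grouping the drift and diffusion contributions into the generator term, the continuous bounded-variation piece of $dX^Z$ into $-\int_0^\cdot e^{-rs}f'(X^Z_s)dZ^C_s$, and the jump remainders into a sum over jump times, yields an identity of the shape
\begin{align*}
e^{-rt}f(X^Z_t)=f(x)&-\int_0^t e^{-rs}(r-\mathcal{A})f(X^Z_s)ds+\int_0^t e^{-rs}\sigma(X^Z_s)f'(X^Z_s)dW_s\\
&-\int_0^t e^{-rs}f'(X^Z_s)dZ^C_s-\sum_{0<s\leq t}e^{-rs}\bigl[f(X^Z_s)-f(X^Z_{s-})\bigr].
\end{align*}
The first-order jump correction $f'(X^Z_{s-})\Delta X^Z_s$ that normally appears in It\^o's formula is absorbed into the $dZ$-integral, so only the bare increments $f(X^Z_s)-f(X^Z_{s-})$ survive in the jump sum. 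With the paper's convention $\Delta f(X^Z_s):=f(X^Z_{s-})-f(X^Z_s)$ (the amount of $f$ removed during a downward jump), the sign of the jump sum matches the statement.

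Next I would verify that the stochastic integral against $W$ is a true martingale, so that taking expectations kills it. Since $X^Z$ is bounded in some $[0,K]$, the continuous function $\sigma f'$ is bounded on each $[\varepsilon,K]$; combined with the hypothesis that $\sigma f'$ is bounded on $(0,\varepsilon)$, the integrand is uniformly bounded, which gives the required $L^2$-integrability. Taking expectations at finite time $t$ and then sending $t\to\infty$ gives the claim: $e^{-rt}f(X^Z_t)\to 0$ in $L^1$ because $f$ is bounded on $[0,K]$, and the remaining terms pass to the limit by dominated convergence, using boundedness of $(r-\mathcal{A})f$ and $f'$ on $[0,K]$ together with the jump estimate $|f(X^Z_s)-f(X^Z_{s-})|\leq\|f'\|_\infty\Delta Z_s$. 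The delicate point, and the reason for the specific hypotheses, is precisely the behaviour of $\sigma f'$ near the boundary $0$: without that assumption the stochastic integral need not be a true martingale, since the derivatives of typical candidate functions (the fundamental solutions $\psi,\varphi$ appearing in the applications of this lemma) can blow up as $x\downarrow 0$, and the combination of boundedness of $X^Z$ and of $\sigma f'$ near $0$ is exactly what tames that behaviour.
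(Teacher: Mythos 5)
Your plan — apply the generalized It\^o formula to $e^{-rt}f(X^Z_t)$, kill the stochastic integral by showing it is a true martingale via the boundedness of $\sigma f'$ near $0$ and of $X^Z$ from above, take expectations and send $t\to\infty$ — is exactly the route the paper takes; the reason given for the hypothesis on $\sigma f'$ (derivatives of $\psi,\varphi$ blowing up at $0$) is also the right one.

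There is, however, a sign slip in the jump term of your intermediate It\^o identity. The standard c\`adl\`ag It\^o correction is $\sum_{0<s\le t}\bigl[f(X^Z_s)-f(X^Z_{s-})-f'(X^Z_{s-})\Delta X^Z_s\bigr]$; after absorbing the first-order piece $-f'(X^Z_{s-})\Delta X^Z_s=f'(X^Z_{s-})\Delta Z_s$ into the $dZ$-integral (which is what you correctly describe), the surviving jump contribution on the right-hand side is $+\sum_{0<s\le t}e^{-rs}\bigl[f(X^Z_s)-f(X^Z_{s-})\bigr]$, not $-\sum$. With the paper's convention $\Delta f(X^Z_s)=f(X^Z_{s-})-f(X^Z_s)$ (which you identified correctly, and which is what matches the usage $\sum_s\int_{X^Z_{s+}}^{X^Z_{s-}}g(u)\,du$ in the proof of Theorem~\ref{prop cont}), this term reads $-\sum e^{-rs}\Delta f(X^Z_s)$ in the It\^o identity, and only after solving for $f(x)$ does it flip to the $+\sum e^{-rs}\Delta f(X^Z_s)$ appearing in the lemma. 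As written, your version produces $-\sum\Delta f$ in the final identity, the opposite sign. Once this is corrected, the rest of the argument (the $L^2$-martingale bound, $e^{-rt}f(X^Z_t)\to0$, and dominated convergence for the remaining terms) goes through unchanged and coincides with the paper's proof.
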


\begin{proof}
Let us apply (generalised) It\^o's lemma to a mapping $e^{-rt }f(X^Z_t)$ to get
\begin{align}\label{eq f}
\begin{aligned}
e^{-r\tau}f(X^Z_t)=f(x)&+\mathcal{E}_t+\int_0^te^{-rs}\left[(\mathcal{A}-r)f(X^Z_s)ds-f'(X^Z_s)dZ^C_s\right]\\
&-\sum_{0\leq s\leq t} e^{-rs}\Delta f(X^Z_s),
\end{aligned}
\end{align}
where $\mathcal{E}_t=\int_0^t e^{-rs}\sigma(X^Z_s)f'(X^Z_s)dW_s$ is a local martingale (e.g. Theorem IV.30.7 in \cite{RogWil00v2}). The boundedness of $X^Z$ implies that also $f(X^Z_t)$ is bounded so that $\lim_{t\to\infty}e^{-rt}f(X^Z_t)=0$. Moreover, the boundedness of $X^Z$ together with the fact that $\sigma(x)f'(x)$ is bounded near $0$, implies that $\E_x\{\mathcal{E}_t\}=0$. Therefore, the claim follows by taking expectation of both sides in \eqref{eq f} and letting $t\to\infty$.
\end{proof}

\subsection{The main results} A use of a control $Z^b$ results into a value $\frac{g(b)}{\psi'(b)}\psi(x)$, which clearly resembles the value of a one-sided optimal stopping problem, only now we have a ratio $\frac{g}{\psi'}$ instead of $\frac{g}{\psi}$. Hence it is no surprise that the analysis of a singular control problem follows more or less analogous path to that of an optimal stopping problem. 
  
\begin{lemma}\label{lemma cont}
Assume that ${z'^*}>0$ and let $a,b\in {M'}$, $b\neq0$, and $0< a\leq b\leq z'^*<\infty$ and let $x\leq b$. Then the following controls yield the same value $\frac{g(z'^*)}{\psi'(z'^*)}\psi(x)$:\label{lemma cont A}
	\begin{enumerate}[(i)]
	\item Control $Z^{z'^*}$.
	\item Control $Z^{b}$.
	\item If $x\in(a,b)$, wait until $X_t$ hits either $a$ or $b$ and then reflect downwards at a threshold it hits first. (I.e. wait time $\tau_{(a,b)}$ and then reflect downwards at a threshold $X_{\tau_{(a,b)}}$.)
	\item Wait until $X_t$ enters the set $M'^+$, where where $M'^+\subset M'$ is any subset that contains $z'^*$, and use downward control with a reflecting threshold $X_{\tau_{M'^+}}$. \label{lemma cont Aiv}
\end{enumerate}
\end{lemma}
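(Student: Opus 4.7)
Plan. The proof mirrors Lemma \ref{cor 5} with $\psi'$ replacing $\psi$ in the ratio while $\psi$ still plays the role of the "hitting‐time transform." I would rely on two ingredients only: the identity $V(Z^c)(x)=\tfrac{g(c)}{\psi'(c)}\psi(x)$ for $x\leq c$ (recalled just before the lemma), and the fact that $\psi$ is $r$-harmonic, so that $e^{-rt}\psi(X_t)$ is a local martingale and, if stopped at a bounded exit time $\tau_{\{a,b\}}$, a true martingale.

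Parts (i) and (ii) are essentially nothing to prove. Applying $Z^{z'^*}$ gives $\tfrac{g(z'^*)}{\psi'(z'^*)}\psi(x)$ directly, and applying $Z^b$ gives $\tfrac{g(b)}{\psi'(b)}\psi(x)$; since $a,b\in M'$ we have $\tfrac{g(a)}{\psi'(a)}=\tfrac{g(b)}{\psi'(b)}=\tfrac{g(z'^*)}{\psi'(z'^*)}$, so these two values coincide with the target.

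For (iii), let $a\leq x\leq b$ and consider the hybrid strategy: wait for $\tau_{\{a,b\}}$, then start reflecting at whichever of $a,b$ is hit first. By the strong Markov property the value of this strategy equals
\begin{align*}
\E_x\!\left\{e^{-r\tau_a};\tau_a<\tau_b\right\}\frac{g(a)}{\psi'(a)}\psi(a)+\E_x\!\left\{e^{-r\tau_b};\tau_b<\tau_a\right\}\frac{g(b)}{\psi'(b)}\psi(b).
\end{align*}
Since $a,b\in M'$, both ratios equal the common constant $K:=\tfrac{g(z'^*)}{\psi'(z'^*)}$, so the expression collapses to
\begin{align*}
K\,\E_x\!\left\{e^{-r\tau_{\{a,b\}}}\psi(X_{\tau_{\{a,b\}}})\right\}=K\psi(x),
\end{align*}
where the last equality uses the optional stopping theorem applied to the bounded martingale $e^{-rt}\psi(X_{t\wedge\tau_{\{a,b\}}})$. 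This is exactly the desired value. (Alternatively, one may compute the two hitting probabilities explicitly via $\psi,\varphi$, as in the proof of Lemma \ref{cor 5}(iii), and verify the collapse algebraically; the martingale argument is cleaner because it avoids introducing $\varphi$.)

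For (iv), continuity of $X$ forces $\tau_{M'^+}=\tau_{\{a,b\}}$ where $a=\sup\{y\in M'^+:y\leq x\}$ and $b=\inf\{y\in M'^+:y\geq x\}$ (with the convention that if no such $a$ exists, then $\tau_{M'^+}=\tau_b$, in which case the claim reduces to (ii) applied at $b$). Hence (iv) reduces to (iii) or (ii) and we are done.

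The main conceptual point — and the only step that is not purely bookkeeping — is the reduction in (iii): recognising that although the controls in the two scenarios $\{\tau_a<\tau_b\}$ and $\{\tau_b<\tau_a\}$ are genuinely different reflecting strategies, the constancy of $g/\psi'$ on $M'$ converts the weighted average into $K$ times the $r$-harmonic average of $\psi$ at the two endpoints, which by optional stopping is $\psi(x)$. No assumption on $\pi$ (e.g. an integrability condition of the form $\E_x\{\sup_s e^{-rs}g(X^Z_s)\}<\infty$) or on the structure of $g$ between $a$ and $b$ is required, because the analysis sees $g$ only through its values at the two points of $M'$.
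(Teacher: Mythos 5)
Your proof is correct and the only part that differs materially from the paper is (iii). For (i), (ii), and (iv) you follow the same route as the paper's proof: (i)--(ii) reduce to the identity $V(Z^c)(x)=\tfrac{g(c)}{\psi'(c)}\psi(x)$ combined with the constancy of $g/\psi'$ on $M'$, and (iv) reduces to (iii) or (ii) by path continuity, exactly as in the paper.

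For (iii) the paper substitutes the explicit expressions $\E_x\{e^{-r\tau_a};\tau_a<\tau_b\}$ and $\E_x\{e^{-r\tau_b};\tau_b<\tau_a\}$ in terms of $\psi$ and $\varphi$, collects coefficients of $\psi(x)$ and $\varphi(x)$, and verifies by direct algebra that the $\varphi(x)$-coefficient vanishes and the $\psi(x)$-coefficient equals $g(b)/\psi'(b)$. You instead observe that after pulling out the common constant $K=g(z'^*)/\psi'(z'^*)$, the remaining quantity is $\E_x\{e^{-r\tau_{\{a,b\}}}\psi(X_{\tau_{\{a,b\}}})\}$, which equals $\psi(x)$ because $e^{-r(t\wedge\tau_{\{a,b\}})}\psi(X_{t\wedge\tau_{\{a,b\}}})$ is a bounded local martingale (hence a uniformly integrable martingale) whose limit one can take via dominated convergence. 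This is a genuinely cleaner argument: it never introduces $\varphi$, it makes transparent that what is really being exploited is the $r$-harmonicity of $\psi$ together with $g/\psi'$ being constant on $M'$, and it generalises verbatim to the optimal stopping analogue (Lemma \ref{cor 5}(iii)). One small imprecision in your Plan paragraph: $\tau_{\{a,b\}}$ is a.s.\ finite but not bounded; what is bounded is the stopped process $e^{-r(t\wedge\tau_{\{a,b\}})}\psi(X_{t\wedge\tau_{\{a,b\}}})$, which is what you correctly invoke in the body of the argument. Your closing remark that no integrability assumption on $g$ is needed because the analysis only sees $g$ through its values at $a,b\in M'$ is accurate and consistent with the paper.
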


\begin{proof}
\noindent \textbf{(i), (ii) } Since $b,z'^*\in M'$, we have, for $x<b$, 
\begin{align*}
\E_x\left\{\int_0^\infty e^{-rs} g(X^Z_s)\circ dZ^{z'^*}_s\right\}=\frac{g(z'^*)}{\psi'(z'^*)}\psi(x)=\frac{g(b)}{\psi'(b)}\psi(x)=\E_x\left\{\int_0^\infty e^{-rs} g(X^Z_s)\circ dZ^{b}_s\right\}.
\end{align*}

\noindent \textbf{(iii)} Let $x\in(a,b)$. The proposed control gives value
\begin{align*}
&\E_x\left\{e^{-r{\tau_a}  };\tau_a<\tau_b\right\}\E_a\left\{\int_0^\infty e^{-rs}g(X^Z_s)\circ dZ^a\right\}\\
&\qquad +\E_x\left\{e^{-r{\tau_b}  };\tau_b<\tau_a\right\}\E_b\left\{\int_0^\infty e^{-rs}g(X^Z_s)\circ dZ^b\right\}\\
&=\frac{\psi(b)\varphi(x)-\varphi(b)\psi(x)}{\psi(b)\varphi(a)-\varphi(b)\psi(a)}\left(\frac{g(a)}{\psi'(a)}\psi(a)\right)+ \frac{\psi(x)\varphi(a)-\varphi(x)\psi(a)}{\psi(b)\varphi(a)-\varphi(b)\psi(a)}\left(\frac{g(b)}{\psi'(b)}\psi(b)\right)\\
&=\frac{\psi(b)\psi(a)\frac{g(a)}{\psi'(a)}-\psi(a)\psi(b)\frac{g(b)}{\psi'(b)}}{\psi(b)\varphi(a)-\varphi(b)\psi(a)}\varphi(x)+ \frac{\psi(b)\varphi(a)\frac{g(b)}{\psi'(b)}-\varphi(b)\psi(a)\frac{g(a)}{\psi'(a)}}{\psi(b)\varphi(a)-\varphi(b)\psi(a)}\psi(x).
\end{align*}
As $a,b\in M$, we see straight that the coefficient of $\varphi(x)$ vanishes, and further that the last term can be written as
\begin{align*}
\left(\frac{\psi(b)\varphi(a)-\varphi(b)\psi(a)}{\psi(b)\varphi(a)-\varphi(b)\psi(a)}\right)\frac{g(b)}{\psi'(b)}\psi(x)=\frac{g(b)}{\psi'(b)}\psi(x).
\end{align*}

\noindent \noindent \textbf{(iv)} As a diffusion is continuous, we must have $\tau_{M'^+}=\tau_{\{a,b\}}$, where $b$ be the smallest point of $M'^+$ such that $b\geq x$, and $a$ is the greatest point of $M'^+$ such that $x\leq a$ (if no such $a$ exist, then $\tau_{M'^+}=\tau_b$). Consequently the claim follows from item (iii) (or (ii))
\end{proof}

Let us state our main result. This utilises  Lemma \ref{lemma cont value}, and hence we shall make the following assumption.
\begin{assumption}\label{assumption}
Assume that $\sigma(x)\psi'(x)$ is bounded on $(0,\varepsilon)$ for some $\varepsilon>0$ and let us consider only controls $Z$ such that $X^Z$ is bounded from above.
\end{assumption}

\begin{theorem}\label{prop cont}
\begin{enumerate}[(A)]
	\item Let Assumption \ref{assumption} hold. For $x\leq z'^*$, the value can be written as
	\begin{align}
	W_Z(x)=\frac{g(z'^*)}{\psi'(z'^*)}\psi(x).
	\end{align}
	Moreover, an admissible optimal control exists and it is any reflecting downwards -control that leads to this value (cf. Lemma \ref{lemma cont}). 	
	\label{prop cont a}
\item If $z'^*=0$, then there is no threshold $\varepsilon>0$ such that a downwards reflecting control $Z^\varepsilon$ would yield the maximal value for $x\leq \varepsilon$.\label{prop cont b} 
\end{enumerate}	 
\end{theorem}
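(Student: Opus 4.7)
The plan is to run the singular-control analogue of the proof of Theorem \ref{prop 1}, with Lemma \ref{lemma cont value} playing the role that the explicit hitting-time formula played in the stopping setting.

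For part (A), let $Z$ be an arbitrary admissible control (so that, by Assumption \ref{assumption}, $X^Z$ is bounded and $\sigma\psi'$ is bounded near $0$). Applying Lemma \ref{lemma cont value} with $f=\psi$ and using $(\mathcal{A}-r)\psi\equiv 0$ yields the representation
\begin{align*}
\psi(x)=\E_x\left\{\int_0^\infty e^{-rs}\psi'(X^Z_s)\,dZ^C_s+\sum_{0\leq s}e^{-rs}\Delta\psi(X^Z_s)\right\},
\end{align*}
where the jump term rewrites, by the fundamental theorem of calculus (recall $X^Z_{s+}=X^Z_{s-}-\Delta Z_s$ and $\psi\in C^2$), as $\int_0^{\Delta Z_s}\psi'(X^Z_{s-}-u)\,du$. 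Because $z'^*$ is the global maximiser of $g/\psi'$, the pointwise inequality $g(z)\le \frac{g(z'^*)}{\psi'(z'^*)}\psi'(z)$ holds for every $z\in\R_+$. Dominating the continuous and jump contributions to $\int_0^\infty e^{-rs}g(X^Z_s)\circ dZ_s$ termwise by the corresponding pieces in the above display, scaled by the constant $g(z'^*)/\psi'(z'^*)$, and taking $\E_x$, I obtain the upper bound $W_Z(x)\leq \frac{g(z'^*)}{\psi'(z'^*)}\psi(x)$ for all $x\leq z'^*$. Equality is attained by Lemma \ref{lemma cont}: the reflecting control $Z^{z'^*}$ (and every other control listed there) produces exactly this value, so the bound is sharp and an admissible optimiser exists.

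For part (B), I mimic the argument of Theorem \ref{prop 1}\eqref{prop 1 B}. Suppose, towards a contradiction, that $z'^*=0$ and that some $Z^\varepsilon$ is optimal for every $x\leq\varepsilon$; by the reflecting-control formula we would then have $W_Z(x)=\frac{g(\varepsilon)}{\psi'(\varepsilon)}\psi(x)$ on $(0,\varepsilon)$. Since $z'^*=0$ means the supremum of $g/\psi'$ is only achieved as a limsup at the boundary, there exists $\hat{x}\in(0,\varepsilon)$ with $g(\hat{x})/\psi'(\hat{x})>g(\varepsilon)/\psi'(\varepsilon)$, and then $Z^{\hat{x}}$ produces a strictly larger value for $x\leq\hat{x}$, contradicting optimality.

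The main obstacle is the jump handling in part (A): one has to verify that the $\circ$-integral of $g$ across a jump is dominated by the analogous $\psi'$-expression after scaling. Concretely, on a jump of size $\Delta Z_s$ starting at $X^Z_{s-}$, the intermediate state traverses the interval $[X^Z_{s-}-\Delta Z_s,\,X^Z_{s-}]$, and the pointwise bound $g(X^Z_{s-}-u)\le \frac{g(z'^*)}{\psi'(z'^*)}\psi'(X^Z_{s-}-u)$ must be integrated against $du$ before it is summed over jump times. Once this is clean, everything matches the $\Delta\psi$-term from Lemma \ref{lemma cont value}. The remaining technicalities — that the local martingale in the Itô expansion is a true martingale vanishing at infinity, and that the boundedness of $X^Z$ and of $\sigma\psi'$ near $0$ suffice for the interchange of limit and expectation — are exactly what Assumption \ref{assumption} is designed to handle.
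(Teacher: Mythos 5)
Your proof is correct and follows essentially the same route as the paper: in part (A) both arguments apply the pointwise bound $g\leq\frac{g(z'^*)}{\psi'(z'^*)}\psi'$ termwise to the continuous and jump pieces of the $\circ$-integral, identify $\E_x\{\int_0^\infty e^{-rs}\psi'(X^Z_s)\circ dZ_s\}=\psi(x)$ via Lemma \ref{lemma cont value} with $f=\psi$, and invoke Lemma \ref{lemma cont} for attainment; part (B) is the same contradiction via a threshold $\hat{x}$ with a strictly larger ratio. The only cosmetic difference is the order in which Lemma \ref{lemma cont value} is invoked (you start from it, the paper ends with it), which changes nothing of substance.
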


\begin{proof}
\noindent \textbf{(A)} Let $x\leq z'^*$ and let $Z_t$ be an arbitrary admissible control such that $X^Z_t$ is bounded from above. Then we have
\begin{align*}
\E_x\left\{\int_0^\infty e^{-rs}g(X^Z_s)\circ dZ_s\right\}&= \E_x\left\{\int_0^\infty e^{-rs}g(X^Z_s)dZ^C_s+\sum_{0\leq s}e^{-rs}\int_{X^Z_{s+}}^{X^Z_{s-}}g(u)du\right\}\\
&=\E_x\left\{\int_0^\infty e^{-rs}\frac{g(X^Z_s)}{\psi'(X^Z_s)}\psi'(X^Z_s)dZ^C_s\right.\\
&\qquad\qquad\left.+\sum_{0\leq s}e^{-rs}\int_{X^Z_{s+}}^{X^Z_{s-}}\frac{g(u)}{\psi'(u)}\psi'(u)du\right\}\\
&\leq \frac{g(z'^*)}{\psi'(z'^*)}\E_x\left\{\int_0^\infty e^{-rs}\psi'(X^Z_s)\circ dZ_s\right\}
=\frac{g(z'^*)}{\psi'(z'^*)}\psi(x),
\end{align*}
where the last equality follows from Lemma \ref{lemma cont value}. For $x\leq z'^*$ this value is attained by applying a reflecting control $Z^{z^*}$, and so the proposed $W_Z(x)$ is the value function. Furthermore, any control from Lemma \ref{lemma cont} produces this maximal value.

\noindent \textbf{(B)}   Let $M'=\{0\}$ and suppose, contrary to our claim, that there exists $\varepsilon>0$ such that, for $x\in(0,\varepsilon)$, a singular control $Z^\varepsilon$ provides the maximal value, so that $W_Z(x)=\frac{g(\varepsilon)}{\psi'(\varepsilon)}\psi(x)$. Since $\limsup_{z\to0}\frac{g(z)}{\psi'(z)}$ is the maximum for $g/\psi'$, there exists $\hat{x}\in(0,\varepsilon)$ such that $\frac{g(\hat{x})}{\psi'(\hat{x})}>\frac{g(\varepsilon)}{\psi'(\varepsilon)}$, whence for all $x\in(0,\hat{x})$
\begin{align*}
\E_x\left\{\int_0^\infty e^{-rs  }g(X^Z_s) \circ dZ^{\hat{x}}\right\}&=\frac{g(\hat{x})}{\psi'(\hat{x})}\psi(x) >\frac{g(\varepsilon)}{\psi'(\varepsilon)}\psi(x)\\
&=\E_x\left\{\int_0^\infty e^{-rs  }g(X^Z_s)\circ dZ^\varepsilon\right\}=W_Z(x),
\end{align*}
contradicting the maximality of $W_Z(x)$.
\end{proof}

The most noteworthy fact is that the Beibel-Lerche -approach allows us to construct a value for a control problem locally, whereas usually, applying variational inequalities, one needs global information in order to solve a problem.

Furthermore, we see that the optimal control is not uniquely determined if $M'$ has at least two members. In fact we could use following kind of control: Choose $M'^+_n$ to be an arbitrary sequence of subsets of $M'$ that includes $z'^*$. In the first step wait time $\tau_{M_0'^+}$ and then reflect downwards at $X_{\tau_{M_0'^+}}$ until $X_t\notin M_0'^+$. In the second step wait time $\tau_{M_1'^+}$ and then reflect downwards at $X_{\tau_{M_1'^+}}$ until $X_t\notin M_1'^+$, etc. This control also leads to a value $\frac{g(z'^*)}{\psi'(z'^*)}\psi(x)$. 

However, in the sequel we wish to be unambiguous, and consistent with the optimal stopping scene, and hence we select $M'$ to be our optimal action region on interval $(0,z'^*]$ with a notion that there might also be others.

\subsection{Minor results}

Similar to optimal stopping scheme, the main theorem gives handful of corollaries in singular control case .

If $g(0+)<0$ or $\frac{d}{dx}\frac{g}{\psi'}(0+)>0$, then $z'^*>0$ and we can directly apply the following corollary.

\begin{corollary}\label{cor cont 0}
Let Assumption \ref{assumption} hold. If $z'^*>0$, then $(0,z'^*)\setminus M'\subset C$ and the value reads as $\frac{g(z'^*)}{\psi'(z'^*)}\psi(x)$ for $x\leq z'^*$.
\end{corollary}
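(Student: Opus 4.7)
The formula $W_Z(x)=\frac{g(z'^*)}{\psi'(z'^*)}\psi(x)$ for $x\leq z'^*$ is nothing but Theorem \ref{prop cont}\eqref{prop cont a}: the hypothesis $z'^*>0$ puts us squarely in case (A) rather than the degenerate case (B), and no new work is needed to identify the value. The only substantive content of the corollary is therefore the inclusion $(0,z'^*)\setminus M'\subset C$.

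For that inclusion, the plan is to exhibit one optimal control whose action set on $(0,z'^*]$ is contained in $M'$; every point of $(0,z'^*)\setminus M'$ will then automatically belong to the inaction region associated with that control. Concretely, I invoke Lemma \ref{lemma cont}\eqref{lemma cont Aiv} with $M'^+:=M'$: the strategy ``wait until $X_t$ first enters $M'$, then reflect downwards at the hitting level $X_{\tau_{M'}}$'' is shown in that lemma to produce the value $\frac{g(z'^*)}{\psi'(z'^*)}\psi(x)$, which by part (A) equals $W_Z(x)$. Hence this is an admissible optimal control, and since its only reflection points are elements of $M'$, no action is ever exerted at any $x\in(0,z'^*)\setminus M'$. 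Thus every such $x$ lies in $C$.

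The one thing to flag is that the statement tacitly depends on the convention, recorded in the paragraph following Theorem \ref{prop cont}, that among the several optimal controls supplied by Lemma \ref{lemma cont} we privilege the one whose action region on $(0,z'^*]$ is exactly $M'$. Once this convention is in force, the proof is bookkeeping and there is no real obstacle: the upper bound based on $g/\psi'\leq g(z'^*)/\psi'(z'^*)$ combined with Lemma \ref{lemma cont value}, and the attainment of the bound by a reflecting control, were already handled in the proof of Theorem \ref{prop cont}; the corollary only extracts a structural consequence about where action does or does not take place.
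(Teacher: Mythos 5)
Your proof is correct and takes essentially the same route as the paper, whose entire proof is the one-liner ``Straight consequence of Theorem \ref{prop cont}.'' Your reconstruction makes the implicit reasoning explicit: the value formula is read off directly from Theorem \ref{prop cont}\eqref{prop cont a}, while the inclusion $(0,z'^*)\setminus M'\subset C$---not literally stated in the theorem---follows from Lemma \ref{lemma cont} applied with $M'^+=M'$ together with the convention, recorded in the paragraph after Theorem \ref{prop cont}, of taking $M'$ as the action region on $(0,z'^*]$.
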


\begin{proof}
Straight consequence of Theorem \ref{prop cont}.
\end{proof}

The special cases that $0$ or $\infty$ is contained in $M'$ are handled in the following two corollaries.

\begin{corollary}\label{cor cont 1}
Let $0\in M'$.
\begin{enumerate}[(A)]
	\item  One can never use control $Z^0$, i.e. one cannot reflect downwards at $0$.\label{cor cont 1 a}
	\item If $z'^*=0$, then one of the following is true.\label{cor cont 1 b}
	\begin{enumerate}[(i)]
	\item It is optimal to drive the process instantaneously (i.e. infinitely fast) to the boundary $0$, whence $S=\R_+$.
	\item There exists $\varepsilon>0$ such that $(0,\varepsilon]\subset S$ and $C\neq \emptyset$.
	\item The optimal control is something else than an admissible reflecting control satisfying Assumption \ref{assumption}.
\end{enumerate}
\end{enumerate}	
\end{corollary}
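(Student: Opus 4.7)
For part (A), the plan is to show that $Z^0$ simply cannot be realised as an admissible control. Reflecting downwards at a threshold $b$ requires the process to live on $[b,\infty)$ with $b$ a reachable reflecting level; taking $b = 0$ would force $X^Z$ to visit the point $0$, which lies outside the state space $\R_+ = (0,\infty)$. Since $0$ is assumed to be natural, exit, entrance, or killing, none of these boundary classifications permits reflection, so $Z^0$ cannot be well-defined as an admissible reflecting control.

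For part (B), my approach is a case analysis driven by what has already been ruled out. Under the hypothesis $z'^*=0$, Theorem \ref{prop cont}\eqref{prop cont b} eliminates every reflecting control $Z^\varepsilon$ with $\varepsilon>0$ as an optimiser for $x$ near $0$, while part (A) rules out $Z^0$ itself. Hence no admissible reflecting control at any threshold attains the supremum in \eqref{eq control problem}. I would then enumerate the only three structural alternatives that can survive. Either (i) the supremum is approached by a sequence $Z^{\varepsilon_n}$ with $\varepsilon_n\downarrow 0$, whose limit drives the process instantaneously onto the boundary and therefore acts everywhere, forcing $S=\R_+$; or (ii) the marginal reward $g/\psi'$ near $0$ is such that an immediate jump strictly dominates inaction on some strip $(0,\varepsilon]$, giving $(0,\varepsilon]\subset S$ while leaving $C\neq\emptyset$ away from the boundary; or (iii) the optimum is attained only by a control outside the class covered by Assumption \ref{assumption}, e.g.\ one for which $X^Z$ fails to be bounded from above or for which $\sigma\psi'$ fails the near-boundary boundedness required by Lemma \ref{lemma cont value}.

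The main obstacle, as I see it, is proving exhaustiveness rather than performing any delicate computation. To verify that (i)-(iii) really partition the remaining possibilities I would decompose an arbitrary admissible control as $Z = Z^C + \Delta Z$ and observe that its continuous part produces reflection-type effects (already ruled out by part (A) and Theorem \ref{prop cont}\eqref{prop cont b}), its jump part produces immediate-action effects on strips (which feeds into (ii)), and any strategy that achieves the supremum only in a limiting sense either collapses to instantaneous driving to the boundary (case (i)) or escapes the technical framework of Assumption \ref{assumption} (case (iii)). Once this trichotomy is made explicit, the corollary follows by matching each surviving regime to one of the three stated items; I expect no genuine computation beyond invoking Theorem \ref{prop cont} and part (A) along the way.
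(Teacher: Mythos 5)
For part (A) your conclusion is right, but you have the direction of reflection backwards: $Z^b$ reflects the process \emph{downwards} at $b$, so the controlled process lives on $(0,b]$, not on $[b,\infty)$. With this corrected, your point becomes even sharper (the would-be state space $(0,0]$ is empty), but it differs from the paper's actual argument, which is a case split on the boundary classification of $0$: if $0$ is unattainable the process never reaches it, so $Z^0$ is never triggered; if $0$ is attainable (exit or killing) the process is sent to the cemetery state upon arrival, before any reflection could occur. Both routes deliver the same conclusion, and yours is arguably the more uniform one, once the direction error is fixed.

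For part (B) the paper records only ``straight consequence of Theorem~\ref{prop cont},'' and you are correct that the real content is exactly Theorem~\ref{prop cont}\eqref{prop cont b} combined with part (A). However, your exhaustiveness argument via the decomposition $Z = Z^C + \sum\Delta Z_s$ does not actually work: the continuous part of a general admissible control need not be a reflection (it can be any finite-variation drift), and the jump part need not correspond to an action strip near $0$, so the three regimes you describe do not cleanly match (i)--(iii). The clean and short route is to note that (iii) is a \emph{catch-all}. Suppose neither (i) nor (ii) holds and suppose, for contradiction, that the optimal control is an admissible reflecting control $Z^b$ satisfying Assumption~\ref{assumption}; by part (A) necessarily $b>0$, so $Z^b$ attains the supremum for all $x\leq b$, which contradicts Theorem~\ref{prop cont}\eqref{prop cont b}. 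Hence (iii) holds, and the trichotomy is exhaustive without any decomposition of $Z$.
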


\begin{proof}

\textbf{(A)} If $0$ is unattainable, it is never attained at a finite time and thus $Z^0$ cannot be used as a control. If $0$ is attainable (i.e. exit or killing), then the process is terminated immediately at $0$, before $Z^0$ is activated, and thus $Z^0$ cannot be used. 

\textbf{(B)}  Straight consequence of Theorem \ref{prop cont}. 
\end{proof}

\begin{corollary}\label{cor cont 1b}
	 Let Assumption \ref{assumption} hold and assume that $z'^*=\infty$. Then $W_Z(x)=A\psi(x)$ for all $x\in\R_+$, where $A=\limsup_{x\to\infty}\left\{\frac{g(x)}{\psi'(x)}\right\}$. Especially, if $A=\infty$, then $W_Z(x)\equiv \infty$. Moreover, \label{cont B}
	\begin{enumerate}[(i)]
	\item if $M'=\{z'^*\}=\{\infty\}$, then $C=\R_+\setminus\{\infty\}$ and there is no admissible optimal control;
	\item if there is at least one other element in $M'$, then there exists an admissible optimal control for $x\leq b'^*:=\sup \{M'\setminus\{\infty\}\}$ and no admissible optimal control for $x>b'^*$, and $C=\R_+\setminus M'$.
	\end{enumerate}
\end{corollary}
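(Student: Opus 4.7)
The plan is to mirror the proof of Corollary \ref{cor 3} in the optimal stopping setting, combined with the Beibel-Lerche style inequality developed in the proof of Theorem \ref{prop cont}(A). For the identity $W_Z(x)=A\psi(x)$, I would first derive the upper bound by running the chain of inequalities from that proof with the universal bound $g(y)/\psi'(y)\leq A$ in place of $g(y)/\psi'(y)\leq g(z'^*)/\psi'(z'^*)$; applying Lemma \ref{lemma cont value} to $\psi$ (legitimate under Assumption \ref{assumption}) to identify $\E_x\{\int_0^\infty e^{-rs}\psi'(X^Z_s)\circ dZ_s\}=\psi(x)$ then gives $W_Z(x)\leq A\psi(x)$. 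For the matching lower bound I would choose an increasing sequence $z_n\uparrow\infty$ with $g(z_n)/\psi'(z_n)\uparrow A$; for fixed $x$ and $n$ large enough that $z_n>x$, the reflecting control $Z^{z_n}$ is admissible and yields the value $\frac{g(z_n)}{\psi'(z_n)}\psi(x)$, which increases to $A\psi(x)$. The case $A=\infty$ follows identically and gives $W_Z(x)\equiv\infty$.

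The split between cases (i) and (ii) relies on the equality clause of the above Beibel-Lerche inequality: an admissible $Z$ can realise $W_Z(x)=A\psi(x)$ only when $g(X^Z_s)/\psi'(X^Z_s)=A$ for $dZ_s$-almost every $s$, so the time-support of $Z$ must be concentrated on visits of $X^Z$ to $M'$ and any jump part can only traverse intervals contained in $M'$. In case (i), $M'=\{\infty\}$; since by Assumption \ref{assumption} the controlled process $X^Z$ stays bounded, no admissible control can act at all, so the supremum is not attained and $C=\R_+\setminus\{\infty\}$. In case (ii), set $b'^*:=\sup(M'\setminus\{\infty\})$, which belongs to $M'$ by upper semicontinuity. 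For $x\leq b'^*$, the reflecting control $Z^{b'^*}$ (or, by the obvious analogue of Lemma \ref{lemma cont}, any reflect-at-$M'\setminus\{\infty\}$ scheme) is admissible and delivers $\frac{g(b'^*)}{\psi'(b'^*)}\psi(x)=A\psi(x)$, while $(0,b'^*)\setminus M'\subset C$ and $M'\setminus\{\infty\}\subset S$ follow from the same equality argument, giving $C=\R_+\setminus M'$ as claimed.

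For $x>b'^*$ in case (ii), the non-attainability of the supremum is the more delicate step. Any candidate optimal admissible $Z$ must remain inactive on $\{X^Z>b'^*\}$, since $M'\cap(b'^*,\infty)=\emptyset$, and cannot reach $\infty$ by admissibility, so it has to wait at least until $\tau_{b'^*}$. By the strong Markov property and Theorem \ref{prop cont}(A) applied at $b'^*$, the value achievable from $x>b'^*$ is then bounded by $\E_x\{e^{-r\tau_{b'^*}}\}\,W_Z(b'^*)=\frac{\varphi(x)}{\varphi(b'^*)}A\psi(b'^*)$, and the linear independence and opposing monotonicity of $\psi$ and $\varphi$ give $\frac{\varphi(x)\psi(b'^*)}{\varphi(b'^*)}<\psi(x)$ for $x>b'^*$, contradicting the purported equality. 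An initial downward jump of size $x-b'^*$ is ruled out in the same spirit: achieving equality along the jump would require $g=A\psi'$ a.e.\ on $(b'^*,x)$, whereas $b'^*=\sup(M'\setminus\{\infty\})$ forces $g/\psi'<A$ there. The main obstacle I anticipate is formalising the equality clause for controls with both continuous and jump parts, comparing $\int_{X^Z_{s+}}^{X^Z_{s-}}g(u)\,du$ with $A(\psi(X^Z_{s-})-\psi(X^Z_{s+}))$ pathwise, and excluding clever mixtures of pre-hitting jumps followed by reflections; once that dichotomy is pinned down the remainder is elementary manipulation with $\psi$ and $\varphi$.
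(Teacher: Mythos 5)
Your proof is correct and follows essentially the same approach as the paper: the Beibel--Lerche upper bound $W_Z(x)\leq A\psi(x)$ via the chain of inequalities from Theorem \ref{prop cont}(A) with the universal bound $g/\psi'\leq A$ and Lemma \ref{lemma cont value}, the matching lower bound via the increasing sequence of reflecting controls $Z^{z_n}$, and non-attainability from the impossibility of acting at $\infty$. Where you diverge slightly is in parts (i) and (ii): the paper simply observes that $Z^\infty$ is not an available control and declares (ii) a ``straight consequence,'' whereas you explicitly invoke the equality clause of the Beibel--Lerche inequality (the control may act only where $g/\psi'=A$) and, for $x>b'^*$ in case (ii), run a strong-Markov/jump-decomposition argument with the monotonicity of $\psi/\varphi$. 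Your version is more detailed and fills in steps the paper leaves implicit; the one phrasing to tighten is in case (i), where what you mean is that a control achieving the supremum would have to be trivial (and the trivial control yields $0<A\psi(x)$), not literally that ``no admissible control can act.''
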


\begin{proof}
Let $A=\limsup_{x\to\infty}\{\frac{g}{\psi'}\}$. Using the arguments from the proof of Theorem \ref{prop cont}, we known that for all admissible controls $Z$ under Assumption \ref{assumption} it is true that
\[\E_x\left\{\int_0^\infty e^{-rs  }g(X^Z_s)\circ dZ_s\right\}\leq A \psi(x).\]

 Since $z'^*=\infty$, we can choose an increasing sequence $z_n\in\R_+$, such that $\lim_{n\to\infty}z_n=z'^*$, $\frac{g(z_n)}{\psi'(z_n)}$ is increasing, and $\lim_{n\to\infty}\frac{g(z_n)}{\psi'(z_n)}=A$. Then a sequence of controls $Z^{z_n}$ gives an increasing sequence of values  $\frac{g(z_n)}{\psi'(z_n)}\psi(x)$. As this converges to $A\psi(x)$, it must be the maximal value. 
 
\textbf{(i)} If $\infty$ is unattainable, then it is never reached and there is no control that provides the optimal value $A\psi(x)$. If $\infty$ is attainable (i.e. exit or killing), then the process is terminated immediately at $\infty$ before a control $Z^\infty$ is activated. 

\textbf{(ii)} Straight consequence from Theorem \ref{prop cont} and part (i). 
\end{proof}
Notice that by Corollary \ref{cor cont 1b} the condition $z'^*=\infty$ alone is not enough to guarantee that the value cannot be attained with an admissible control, while condition $M'=\{\infty\}$ is enough. 

\begin{lemma}\label{cor cont 2}
If an interval $(a,b)\subset M$, then $g(x)=K \psi'(x)$ for all $x\in(a,b)$ for some $K>0$. 
\end{lemma}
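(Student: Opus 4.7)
The plan is to mirror the proof of Lemma~\ref{lemma 2}\eqref{lemma 2 A} essentially verbatim, with $\psi'$ in place of $\psi$ (and, I suspect, reading $M'$ for the $M$ that appears in the displayed statement, since $g/\psi'$ is the relevant ratio in the singular control setting). First I would unpack the hypothesis: $(a,b)\subset M'$ says that every $x\in (a,b)$ lies in $\argmax\{g(y)/\psi'(y)\}$, i.e. every such $x$ attains the global supremum of the ratio. Since a supremum is a single real number, writing $K=\sup_y\{g(y)/\psi'(y)\}$ we immediately get $g(x)/\psi'(x)=K$ for all $x\in (a,b)$, and rearranging yields $g(x)=K\psi'(x)$ on the whole interval.

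To verify $K>0$, I would combine two observations: $\psi$ is strictly increasing so $\psi'(x)>0$, and $g$ is assumed to attain positive values somewhere in $\R_+$ (otherwise the control problem is trivially solved by $Z\equiv 0$). Together these force the supremum $K$ of $g/\psi'$ to be strictly positive.

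I expect no real obstacle here: the argument is purely definitional, resting only on the meaning of $\argmax$. The only mild subtlety is the notational one of reading $M$ as $M'$ so that the ratio appearing in the conclusion matches the set hypothesised to contain the interval; beyond that, no integrability, smoothness, or diffusion-theoretic input is required.
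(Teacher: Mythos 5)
Your proof matches the paper's own argument essentially verbatim: the paper likewise observes that $(a,b)\subset M'$ forces $g/\psi'$ to equal its constant supremum $K>0$ on the interval, and the paper's statement indeed has the same typo ($M$ for $M'$) that you correctly identify. Your slightly fuller justification that $K>0$ is a harmless amplification of the same definitional argument.
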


\begin{proof}
Since $(a,b)\subset M$ we know that $x=\argmax\{g(z)/\psi'(z)\}$ for all $x\in(a,b)$. This in turns means that $\frac{g(x)}{\psi'(x)}=K$ for some $K>0$ on $(a,b)$.
\end{proof}

\subsection{Differences to optimal stopping}

The main difference to the optimal stopping case is the fact that in the singular control problem \eqref{eq control problem} only the ratio $g/\psi'$ has meaning; the ratio $g/\varphi'$ leads to values that cannot be attained with downward control. It should be mentioned that this cannot be fixed easily by adding an upward control to the problem. This is because when both downward and upward controls are present, the solution can rarely be identified with a one-sided control, and as a consequence the analysis of the simple ratios $g/\psi'$ and $g/\varphi'$ is not adequate. 

We also notice that we found more corollaries in the optimal stopping scene. This can be seen due to a rather simple local characterisation of stopping regions/continuation regions in optimal stopping problems: $x$ is in the continuation region whenever $V(x)>g(x)$. On the other hand, in control problems action regions/inaction regions are rarely found locally; More often than not, they are found globally applying variational inequalities. Most noticeably this is seen in the fact that in the control scene the set $C_{\psi'}=\left\{x\mid \frac{g(x)}{\psi'(x)}\text{ is strictly increasing }\right\}$ is not necessarily part of the inaction region, as we shall see in Example \ref{ex cont increasing}.

\subsection{Extensions}\label{subsec con gen}

We can easily extend the results from this section to concern also a running payoff case. To that end let $\pi:\R_+\to\R$ be once continuously differentiable function for which $\E_x\left\{\int_0^\infty|\pi(X_s)|ds\right\}<\infty$, and let us study a problem
\begin{align}\label{eq cont gen}
\sup_Z\E_x\left\{\int_0^\infty e^{-rs} \pi(X^Z_s)ds+\int_0^\infty e^{-rs}g(X^Z_s)\circ dZ_s\right\}.
\end{align}
As the resolvent $(R_r\pi)(x)$ solves the ordinary differential equation $\left(\mathcal{A}-r\right)u(x)=-\pi(x)$, a straight consequence of Lemma \ref{lemma cont value} is that 
\begin{align*}
(R_r\pi)(x)=\E_x\left\{\int_0^\infty e^{-rs} \pi(X^Z_s)ds+\int_0^\infty e^{-rs}(R_r\pi)'(X^Z_s)\circ dZ_s\right\}.
\end{align*}
Hence the problem \eqref{eq cont gen} can be re-written as
\begin{align*}
(R_r\pi)(x)+\sup_Z\E_x\left\{\int_0^\infty e^{-rs}\left(g(X^Z_s)-(R_r\pi)'(X^Z_s)\right)\circ dZ_s\right\},
\end{align*}
provided that the conditions of Lemma \ref{lemma cont value} are satisfied.

It follows at once that all the results from this section hold for this problem with obvious changes and with the set $M'=\left\{x\mid x=\argmax\{\frac{g(z)-(R_r\pi)'(z)}{\psi'(z)}\}\right\}$.

\subsection{Controlling upwards}
Until now we have only controlled the diffusion downwards. Let us now introduce an upward control defining
\begin{align*}
dX^Y_t=\mu(X_t^Y)dt+\sigma(X^Y_t)dW_t+dY_t,\quad X^Y_0=X_0=x,
\end{align*}
on $\R_+$, where $\mu$ and $\sigma$ are as previously and $Y_t$ is a non-negative, non-decreasing, right-continuous, and $\mathcal{F}_t$-adapted. For a function $g$, defined as previously, we define a singular control problem
\begin{align}\label{eq control problem 2}
W_Y(x)=\sup_Y\E_x\left\{\int_0^\infty e^{-rs  }g(X^Y_s)\circ dY_s\right\}.
\end{align}

It is known that, for $x>a$, the value applying $Y^a_t$, i.e. a control that reflects upwards at a threshold $a$, can be written as \(-\frac{g(a)}{\varphi'(a)}\varphi(x)\) (see e.g. discussion below Lemma 3.1 in \cite{Matomaki12}).
It is now quite clear that all the results from this section hold true for the problem \eqref{eq control problem 2} near the upper boundary with obvious changes; Instead of $g/\psi'$ we have $-g/\varphi'$, instead of $M'$ we have $N'=\left\{x\mid x=\argmax\{-g(y)/\varphi'(y)\}\right\}$, instead of $z'^*$ we have $y'^*=\inf\{N'\}$, etc.

\subsection{Examples}\label{sec example singular}

Let $X_t$ be as in Section \ref{sec example}, i.e. $X_t$ is a geometric Brownian motion for which $\psi(x)=x^2$ and $\varphi(x)=x^{-6}$.

\begin{example}\label{ex 8} Here we show that both an impulse control and a singular control can yield the maximal value. Let 
\begin{align*}
g_{\ref{ex 8}}(x)=\begin{cases}
16\left(\sqrt{x}-2\right),\quad &x\leq16\\
2x,\quad &x\in(16,25)\\
20\left(\sqrt{x}-2.5\right),\quad &x\geq25,
\end{cases}
\end{align*}
so that $g_{\ref{ex 8}}$ is continuous, increasing, $\sup\{\frac{g_{\ref{ex 8}}(x)}{\psi'(x)}\}=1$, and $M'=[16,25]$. Consequently, by Theorem \ref{prop cont}, for $x\leq 25$, a control $Z^{25}$ (with $M'^+=\{25\}$) provides the solution and the value reads as $W_Z(x)=\psi(x)$. 

On the other hand, taking $M'=[16,25]$ to be the action region, we get a control that coincides with an impulse control $(\tau_{25};\zeta)$ with $\tau_{25}=\inf\{t\geq0\mid X_t=25\}$ and $\zeta=9$. That is, every time we hit the state $25$, we jump to the state $16$. It can be easily calculated that, for $x\leq 25$, this impulse control also results into a value $\psi(x)$.
\end{example}

\begin{example}\label{ex cont increasing} In this example we will see that $C_{\psi'}=\{x\mid \frac{g(x)}{\psi'(x)} \text{ is strictly increasing }\}$ does not necessarily belong to the inaction region. Take
\begin{align*}
g_{\ref{ex cont increasing}}(x)=\begin{cases}
\sqrt{x}-1,\quad &x<6.25\\
1.4\sqrt{x}-2,\quad&x\geq 6.25.
\end{cases}
\end{align*}
Now $M'=\{z^*\}=\{4\}$, and $C_{\psi'}=(0,4)\cup(6.25,8.16)$. However, it can be shown (cf. Lemma 1 in \cite{Alvarez00b} for a verification result) that the action region is $[4,\infty)$ and that the optimal control is $Z^4$.  In other words, contrary to what Lemma \ref{lemma increasing} from optimal stopping scene would suggest, the region $C_{\psi'}$ do not necessary belong to the inaction region in singular control problems.
\end{example}

\section{Connection between singular control and optimal stopping}\label{sec connection}

\subsection{Introducing the associated optimal stopping problem}

In most studies analysing the connection between singular control and optimal stopping there are some growth, convexity, or positiveness restrictions on the payoff $g$, or there are some restrictions for the diffusion process. These restrictions guarantee the connection to hold everywhere on the state space which is exactly what one usually needs or wants. However, in some sense something has gone unnoticed because of these restrictions: the renowned connection is not a global phenomenon. Here we will show that when we let the setting to be very general, the connection does not necessarily hold on the whole state space (Proposition \ref{lemma W'} and Example \ref{example last}). 

To introduce the connection in the present case, let us consider control problems \eqref{eq control problem} and \eqref{eq control problem 2} and let $X$, $X^Z$, $X^Y$, $Z$, $Y$, $W^Z$, $W^Y$ and $g$ be as in the previous section.  

Let us define an associated diffusion
\[d\hat{X}_t=\left(\mu(\hat{X}_t)+\sigma'(\hat{X}_t)\sigma(\hat{X}_t)\right)dt+ \sigma(\hat{X}_t)dW_t\]
and let us consider an optimal stopping problem
\begin{align}\label{eq as stop}
\hat{V}(x)=\sup_\tau\E_x\left\{e^{-\int_0^\tau\left(r-\mu'(\hat{X}_s)\right)ds}g(\hat{X}_\tau)\right\}.
\end{align}
(The infinitesimal generator $\hat{\mathcal{A}}-\left(r-\mu'(x)\right)$ can be seen as a derivative of the operator $\mathcal{A}-r$.)

In the sequel we need to utilise the fact that the Laplace transform of $\tau_b=\inf\{t\geq0\mid X_t=b\}$, the hitting time to a state $b$, with respect to a diffusion $\hat{X}$ can be written as 
\begin{align}\label{eq laplace}
\E_x\left\{e^{-\int_0^{\tau_b}\left(r-\mu'(\hat{X}_s)\right)ds}\right\}=\begin{cases}\frac{\psi'(x)}{\psi'(b)},\quad&x\leq b\\
\frac{\varphi'(x)}{\varphi'(b)},\quad&x> b.
\end{cases}
\end{align}
It is quite clear that if $\psi$ and $\varphi$ are convex, then their derivatives $\psi'$ and $-\varphi'$ are the non-negative increasing and decreasing fundamental solutions to $\left(\hat{\mathcal{A}}-\left(r-\mu'(x)\right)\right)u(x)=0$. This in turn ensures that in the convex case condition \eqref{eq laplace} holds. 

In the following we state sufficient conditions guaranteeing that the fundamental solutions $\psi$ and $\varphi$ are convex.

\begin{lemma}
\begin{enumerate}[(A)]
\item Assume that a transversality condition $\lim_{t\to\infty}\E_x\left\{e^{-rt}X_t\right\}=0$ holds and that $r>\mu'(x)$ for all $x\in\R_+$. Further assume that either $0$ is unattainable or $\lim_{x\to0}\left(rx-\mu(x)\right)\geq0$. Then $\psi(x)$ is convex.\label{lemma con A}
	\item Assume that one or the other of the following hold.
	\begin{enumerate}[(i)]
	\item $\mu(x)\geq0$ for all $x\in\R_+$; or
	\item a transversality condition holds, $\infty$ is unattainable and $r>\mu'(x)$ for all $x\in\R_+$.
	\end{enumerate}  Then $\varphi$ is convex.\label{lemma con B}
\end{enumerate}
\end{lemma}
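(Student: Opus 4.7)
The plan is to reduce the convexity of $\psi$ and $\varphi$ to a direct probabilistic analysis of their derivatives on the associated diffusion $\hat X$, exploiting the positive discount $r-\mu'$.

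For part (A), I would first differentiate the defining ODE $(\mathcal{A}-r)\psi = 0$ (justified once we note that $\mu,\sigma$ are smooth enough for $\psi\in C^3$) to obtain
\begin{equation*}
\left(\hat{\mathcal{A}} - (r - \mu'(x))\right)\psi'(x) = 0.
\end{equation*}
Since $r - \mu' > 0$, this is a non-degenerate linear ODE with strictly positive effective discount. By It\^{o}'s formula, $M_t := \exp\bigl(-\int_0^t(r - \mu'(\hat X_s))\,ds\bigr)\,\psi'(\hat X_t)$ is a local $\mathbf{P}_x$-martingale. Optional stopping at $\tau_b := \inf\{t\ge 0 : \hat X_t = b\}$ with $b>x$ should then yield
\begin{equation*}
\psi'(x) = \E_x\!\left[\exp\!\left(-\int_0^{\tau_b}(r-\mu'(\hat X_s))\,ds\right)\right]\psi'(b),
\end{equation*}
and since the bracketed expectation lies in $(0,1]$ (positive discount), this gives $\psi'(x) \le \psi'(b)$, i.e.\ $\psi'$ is non-decreasing, so $\psi$ is convex.

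The main technical obstacle is the justification of this optional-stopping step: I must show that $\tau_b<\infty$ almost surely and that $M_{t\wedge\tau_b}$ is uniformly integrable. This is where the hypotheses enter. The transversality condition $\E_x[e^{-rt}X_t]\to 0$, together with the ODE control of $\psi$, provides the integrability needed to promote the local martingale to a true martingale. The lower-boundary hypothesis (either $0$ unattainable, or $\mu(0+)\le 0$, which follows from $\lim_{x\to 0}(rx-\mu(x))\ge 0$) is used to rule out escape through $0$: introducing a two-sided stopping $\tau_{a,b}:=\inf\{t: \hat X_t\in\{a,b\}\}$ and letting $a\downarrow 0$, the contribution from the event $\{\tau_a<\tau_b\}$ must vanish, which is delivered by $\psi'(0+)<\infty$ (forced by the boundary assumption) and by the exponential factor $e^{-\int_0^{\tau_a}(r-\mu')ds}\to 0$ whenever $\tau_a\to\infty$.

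For part (B), case (i) where $\mu\ge 0$ is immediate from the ODE written as $\tfrac12\sigma^2(x)\varphi''(x) = r\varphi(x) - \mu(x)\varphi'(x)$: since $\varphi>0$, $\varphi'<0$ (as $\varphi$ is strictly decreasing) and $\mu\ge 0$, the right-hand side is strictly positive, hence $\varphi''>0$. Case (ii) mirrors part (A): differentiating the ODE again gives $(\hat{\mathcal{A}}-(r-\mu'))\varphi' = 0$, and applying the same It\^{o}/optional-stopping argument with $\tau_a:=\inf\{t:\hat X_t=a\}$ for $a<x$ delivers
\begin{equation*}
\varphi'(x) = \E_x\!\left[\exp\!\left(-\int_0^{\tau_a}(r-\mu'(\hat X_s))\,ds\right)\right]\varphi'(a).
\end{equation*}
Since $\varphi'(a)<0$ and the expectation lies in $(0,1]$, we obtain $\varphi'(x)\ge\varphi'(a)$, so $\varphi'$ is non-decreasing and $\varphi$ is convex. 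Here the unattainability of $\infty$ (rather than of $0$) prevents loss of mass at the upper boundary, and transversality again underwrites the optional-stopping step. As in (A), the hard part is exclusively the boundary control; once the representation is in hand, convexity falls out of the sign of $r-\mu'$.
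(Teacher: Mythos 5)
Your proof of part (B)(i) coincides with the paper's: both read off the sign of $\varphi''$ directly from $\tfrac12\sigma^2\varphi'' = r\varphi - \mu\varphi'$ using $\varphi>0$, $\varphi'<0$, $\mu\ge 0$.

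For (A) and (B)(ii) you take a genuinely different route. The paper cites Corollary~1 of Alvarez (2003), which rests on the analytic integral representation (reproduced as Theorem~\ref{theo integral} in the paper)
\[
\sigma^2(x)\frac{\psi''(x)}{S'(x)} = 2r\int_0^x \psi(y)\bigl(\theta(x)-\theta(y)\bigr)m'(y)\,dy,\qquad \theta(x)=rx-\mu(x),
\]
so that $r>\mu'$ (i.e.\ $\theta$ increasing) gives $\psi''\ge 0$ pointwise with no further work; the transversality and boundary hypotheses are exactly what makes this representation valid. You instead differentiate the ODE, produce the local martingale $M_t=e^{-\int_0^t(r-\mu')\,ds}\psi'(\hat X_t)$, and try to extract $\psi'(x)\le\psi'(b)$ from optional stopping. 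The conceptual picture is right, and it is the same probabilistic content that underlies the integral formula, but as written there is a real gap at the very step you flag as the ``main technical obstacle.'' Concretely: after two-sided stopping at $\tau_{a,b}$ (which is legitimate, since $M_{t\wedge\tau_{a,b}}$ is bounded) you get
\[
\psi'(x)=\E_x\!\bigl[e^{-\int_0^{\tau_a}(r-\mu')\,ds};\,\tau_a<\tau_b\bigr]\psi'(a)+\E_x\!\bigl[e^{-\int_0^{\tau_b}(r-\mu')\,ds};\,\tau_b<\tau_a\bigr]\psi'(b),
\]
and you need the first term to vanish as $a\downarrow 0$. You assert this follows from $\tau_a\to\infty$ plus the positive discount, but $r-\mu'>0$ does not imply $\int_0^{\tau_a}(r-\mu'(\hat X_s))\,ds\to\infty$ if $r-\mu'$ is allowed to decay to $0$ at the boundary; nor is ``$\psi'(0+)<\infty$'' a direct consequence of the stated hypotheses (for a natural boundary $\psi'(0+)$ can be $0$, finite, or infinite, and the alternative hypothesis $\theta(0+)\ge 0$ says nothing about it directly). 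There is also a quieter issue: the lemma's boundary hypotheses concern the original diffusion $X$, whereas your argument lives on $\hat X$ (drift $\mu+\sigma\sigma'$), whose boundary classification at $0$ and $\infty$ need not coincide with that of $X$. None of these are necessarily fatal, but they are exactly the places where Alvarez's integral route has the accounting already done, which is presumably why the paper defers to it rather than re-deriving the martingale estimates from scratch.
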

\begin{proof}
Item \eqref{lemma con A} and the latter part of item \eqref{lemma con B} follow from similar deduction to Corollary 1 from \cite{Alvarez03}. The former part of item \eqref{lemma con B} follows after noticing that
\[\frac{1}{2}\sigma^2(x)\varphi''(x)=r\varphi(x)-\mu(x)\varphi'(x).\qedhere\]
\end{proof}

\subsection{The connection between singular control and optimal stopping}\label{subsec covex}

Assuming that the condition \eqref{eq laplace} holds, we can straightforwardly apply Theorem \ref{prop 1} to the associated stopping problem \eqref{eq as stop} with the sets $\hat{M}=\left\{x\mid x=\argmax\{\frac{g(z)}{\psi'(z)}\}\right\}$ and $\hat{N}=\left\{x\mid x=\argmax\{\frac{g(y)}{-\varphi'(y)}\}\right\}$. On the other hand, at the same time we can apply Theorem \ref{prop cont} to the control problems \eqref{eq control problem} and \eqref{eq control problem 2} with the sets $M'=\hat{M}$ and $N'=\hat{N}$.

These facts give the following proposition, which is the celebrated connection between a singular control problem and the associated optimal stopping problem in our case.

\begin{proposition}\label{lemma W'} Assume that $\psi$ and $\varphi$ satisfy \eqref{eq laplace} and that Assumption \ref{assumption} hold.
\begin{enumerate}[(A)]
	\item One has $W_Z'(x)=\hat{V}(x)$ for all $x\leq z'^*$.
	\item One has $W_Y'(x)=\hat{V}(x)$ for all $x\geq y'^*$.
\end{enumerate}
\end{proposition}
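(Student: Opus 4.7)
The plan is to combine the explicit formulas for the two value functions provided by Theorem~\ref{prop cont} (for the singular control) and Theorem~\ref{prop 1} (for the associated stopping problem), after first confirming that $\psi'$ and $-\varphi'$ play the role of the positive, monotone fundamental solutions of the ODE $(\hat{\mathcal{A}}-(r-\mu'))u=0$ governing $\hat{V}$. This identification is exactly the content of the Laplace transform identity~\eqref{eq laplace}, which in turn follows from the convexity of $\psi$ and $\varphi$ guaranteed by the lemma immediately preceding the proposition.

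For part (A), Theorem~\ref{prop cont}(A) applied under Assumption~\ref{assumption} yields $W_Z(x)=\frac{g(z'^*)}{\psi'(z'^*)}\psi(x)$ for $x\leq z'^*$, and differentiating in $x$ gives
\[W_Z'(x)=\frac{g(z'^*)}{\psi'(z'^*)}\psi'(x).\]
On the stopping side, the defining set of global maximisers of the ratio $g/\psi'$ for the problem~\eqref{eq as stop} coincides by construction with $M'$, so its supremum equals $z'^*$. Applying Theorem~\ref{prop 1}(A) with $\psi'$ playing the role of the increasing fundamental solution then yields
\[\hat{V}(x)=\frac{g(z'^*)}{\psi'(z'^*)}\psi'(x)\]
for $x\in(0,z'^*]$, which matches $W_Z'(x)$ exactly.

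Part (B) proceeds analogously, using the upward-reflection formula from the ``Controlling upwards'' subsection, $W_Y(x)=-\frac{g(y'^*)}{\varphi'(y'^*)}\varphi(x)$ for $x\geq y'^*$, together with Theorem~\ref{prop 1}(B) applied with $-\varphi'$ as the positive decreasing fundamental solution of $(\hat{\mathcal{A}}-(r-\mu'))u=0$, with maximising set $N'$ and minimal element $\hat{y}^*=y'^*$. Since the entire argument reduces to equating two closed-form expressions whose denominators, numerators and maximising points agree by construction, the only step demanding real care is the identification of fundamental solutions flagged in the first paragraph; its verification is algebraic once one differentiates $(\mathcal{A}-r)\psi=0$ in $x$ to obtain $(\hat{\mathcal{A}}-(r-\mu'))\psi'=0$ (and likewise for $\varphi'$), so beyond the convexity hypothesis making $\psi'>0$ increasing and $-\varphi'>0$ decreasing, no deeper obstacle arises.
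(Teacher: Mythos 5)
Your proof is correct and follows the same route the paper takes (it only sketches the argument before the statement): identify $\psi'$ and $-\varphi'$ as the fundamental solutions of the associated generator via \eqref{eq laplace}, apply Theorem~\ref{prop 1} to the associated stopping problem and Theorem~\ref{prop cont} to the control problem, observe that $\hat M=M'$ and $\hat N=N'$, and equate the resulting closed-form expressions after differentiating $W_Z$ (resp.\ $W_Y$). One small imprecision: the proposition assumes \eqref{eq laplace} directly rather than convexity of $\psi,\varphi$ --- convexity is merely one sufficient condition for \eqref{eq laplace}, and the paper (Lemma~\ref{lemma lap}) also establishes \eqref{eq laplace} in certain concave cases --- but this does not affect the validity of your argument.
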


We see that the associated stopping problem carries, potentially, more information than a single singular control problem; Although $g/\varphi'$ played no role in a downward controlled singular control problem, it has a well defined meaning in the associated stopping problem. This is illustrated in Example \ref{example last} below. Moreover, the proposition reveals that there is no guarantee, \emph{a priori}, that we can connect a one-sided singular control problem to its associated stopping problem on \emph{the whole state space}. This observation gives us the following, quite interesting, necessary condition under which this connection can hold everywhere.

\begin{corollary}\label{cor WV}  Assume that $\psi$ and $\varphi$ satisfy \eqref{eq laplace} and that Assumption \ref{assumption} hold.
\begin{enumerate}[(A)]
	\item We can have $W_Z'(x)=\hat{V}(x)$ for all $x\in\R_+$ only if $y'^*=\infty$.
	\item We can have $W_Y'(x)=\hat{V}(x)$ for all $x\in\R_+$ only if $z'^*=0$.
\end{enumerate}
\end{corollary}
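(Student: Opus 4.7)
I prove part (A); part (B) is symmetric, using the upward control analysis developed in the ``Controlling upwards'' subsection. My strategy is proof by contradiction: assume $W_Z'(x)=\hat V(x)$ for every $x\in\R_+$ while $y'^*<\infty$, and locate an inconsistency past $y'^*$.

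Since \eqref{eq laplace} holds, $\psi'$ and $-\varphi'$ are the positive increasing and decreasing fundamental solutions of $(\hat{\mathcal{A}}-(r-\mu'))u=0$. Applying Theorem \ref{prop 1}(B) directly to the associated stopping problem $\hat V$ (with $\hat N=N'$ and associated minimiser $y'^*$) gives
\[
\hat V(x)=\frac{g(y'^*)}{\varphi'(y'^*)}\varphi'(x)\quad\text{for }x\geq y'^*,\qquad (y'^*,\infty)\setminus\hat N\subseteq\hat C,
\]
so $\hat V(x)>g(x)$ on $(y'^*,\infty)\setminus\hat N$.

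On the control side, for any $x>z'^*$ the admissible policy ``push the state instantaneously from $x$ to $z'^*$ and then apply $Z^{z'^*}$'' is available, which, combined with Theorem \ref{prop cont}(A), gives the lower bound
\[
W_Z(x)\geq \frac{g(z'^*)}{\psi'(z'^*)}\psi(z'^*)+\int_{z'^*}^{x}g(u)\,du,
\]
whence $W_Z'(x)\geq g(x)$; a standard verification argument based on Lemma \ref{lemma cont value} shows this bound is saturated in the action region past $z'^*$, so that $W_Z'(x)=g(x)$ there. Under the assumed identity, any $x\in(y'^*,\infty)\setminus\hat N$ then satisfies $W_Z'(x)=\hat V(x)>g(x)$, which excludes $x$ from the action region --- it must belong to the inaction region. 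On an inaction component $W_Z$ solves $(\mathcal{A}-r)W_Z=0$, so it reduces to a linear combination $A\psi+B\varphi$ locally; matching the derivative $W_Z'$ with $\hat V=\frac{g(y'^*)}{\varphi'(y'^*)}\varphi'$ on that component forces $A=0$. Propagating through the (nonempty) action region past $z'^*$ via the value-matching identity $W_Z'=g$ contradicts the strictly positive $\psi$-coefficient $\frac{g(z'^*)}{\psi'(z'^*)}$ dictated by Theorem \ref{prop cont}(A) on $(0,z'^*]$. The only loophole is $(y'^*,\infty)\subseteq\hat N$, where Lemma \ref{cor cont 2} forces $g=K(-\varphi')$ on $(y'^*,\infty)$ for some $K>0$; the same matching argument through $(z'^*,y'^*)$ then collapses this intermediate interval to a single point, again ruling out $y'^*<\infty$.

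The principal technical hurdle is the value-matching saturation $W_Z'(x)=g(x)$ on the action region past $z'^*$; this is the classical one-sided singular control identity in disguise, and in the present general setting it is established through an It\^o-type verification based on Lemma \ref{lemma cont value}, modelled on the optimality proof of $Z^{z'^*}$ inside Theorem \ref{prop cont}.
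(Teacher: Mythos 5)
You start correctly from Proposition \ref{lemma W'}, as the paper does, obtaining $\hat V(x)=\frac{g(y'^*)}{\varphi'(y'^*)}\varphi'(x)$ on $[y'^*,\infty)$. The paper then simply \emph{asserts} that this $\varphi$-shaped value cannot come from a downward control; you try to supply an actual contradiction, which is the right instinct, but the final steps do not close. The ``propagation'' argument does not yield a contradiction: forcing $A=0$ on an inaction component inside $(y'^*,\infty)$ is not in conflict with the positive $\psi$-coefficient on the separate component $(0,z'^*]$, since distinct inaction components carry independent constants. The contradiction you actually want there is positivity: $g(y'^*)>0$ gives $B=\frac{g(y'^*)}{\varphi'(y'^*)}<0$, so $W_Z=B\varphi<0$ on that component, contradicting $W_Z\geq 0$. (Incidentally, $W_Z\geq L$ for a particular lower bound $L$ does not give $W_Z'\geq L'$; the inequality $W_Z'\geq g$ needs the local comparison $W_Z(x)\geq W_Z(x-h)+\int_{x-h}^x g\,du$, $h\downarrow 0$.)

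The more serious gap is the loophole $(y'^*,\infty)\subseteq\hat N$, where Lemma \ref{cor cont 2} gives $g=K(-\varphi')$ beyond $y'^*$. Your ``collapsing $(z'^*,y'^*)$ to a point'' step only yields $z'^*=y'^*$, which is entirely consistent with $y'^*<\infty$ and does not deliver $y'^*=\infty$. That branch cannot in fact be dispatched by coefficient matching: take $g=K_1\psi'$ on $(0,p]$ and $g=K_2(-\varphi')$ on $(p,\infty)$ with $K_1,K_2$ chosen so $g$ is continuous at $p$. Then $M'=(0,p]$, $\hat N=[p,\infty)$, $z'^*=y'^*=p$, $\hat V\equiv g$, while the reflect-at-$p$ control gives $W_Z(x)=\frac{g(p)}{\psi'(p)}\psi(x)$ on $(0,p]$ and $W_Z(x)=W_Z(p)+\int_p^x g\,du$ on $(p,\infty)$, so $W_Z'=g=\hat V$ on all of $\R_+$ with $y'^*=p<\infty$. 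Your proposal exposes exactly the weak spot in the paper's one-line assertion but does not repair it; as written, the argument does not establish the corollary in the degenerate case $z'^*=y'^*$.
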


\begin{proof}
\noindent \textbf{(A)} By Proposition \ref{lemma W'} we have $\hat{V}(x)=\frac{g(y'^*)}{\varphi'(y'^*)}\varphi'(x)$, for all $x\geq y'^*$. As the value $\frac{g(y'^*)}{\varphi'(y'^*)}\varphi(x)$ cannot be reached by applying a downward control, we can have $W_Z'(x)=\hat{V}(x)$ for all $x\in\R_+$ only if $y'^*=\infty$. Part (B) follows analogously.
\end{proof}

\subsection{When fundamental solutions can be concave}
Earlier we have seen that the Laplace transform of the hitting time in \eqref{eq laplace} holds if $\psi$ and $\varphi$ were convex. Here we generalise this to a case where $\psi$ and $\varphi$ are concave near the boundaries. 

The following theorem (Theorem 1 in \cite{Alvarez03}) reveals that the fundamental solutions are concave near the boundaries, if $r-\mu'<0$ there.

\begin{theorem}\label{theo integral}
Assume that the transversality condition holds and that $0$ and $\infty$ are unattainable. Then for all $x\in\R_+$
\begin{align*}
\sigma^2(x)\frac{\psi''(x)}{S'(x)}&=2r\int_0^x\psi(y)\left(\theta(x)-\theta(y)\right)m'(y)dy,\quad\text{and }\\
\sigma^2(x)\frac{\varphi''(x)}{S'(x)}&=2r\int_x^\infty\varphi(y)\left(\theta(y)-\theta(x)\right)m'(y)dy,
\end{align*}
where $\theta(x)=rx-\mu(x)$.
\end{theorem}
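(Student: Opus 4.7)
The plan is to prove the $\psi$ identity in detail; the $\varphi$ identity follows by a symmetric argument. The starting point is that the ODE $(\mathcal{A}-r)\psi=0$ has the Feller (divergence) form
\[
\frac{d}{dx}\!\left(\frac{\psi'(x)}{S'(x)}\right)=r\,\psi(x)\,m'(x),
\]
obtained using $S'(x)=\exp(-\int^x 2\mu/\sigma^2)$ and $m'(x)=2/(\sigma^2(x)S'(x))$. Integrating on $(0,x]$ and using the boundary relation $\lim_{y\downarrow 0}\psi'(y)/S'(y)=0$ --- a standard consequence of $0$ being an unattainable natural boundary together with the transversality hypothesis, see e.g.\ Borodin--Salminen --- yields the auxiliary identity
\[
\frac{\psi'(x)}{S'(x)}=r\int_0^x\psi(y)\,m'(y)\,dy. \quad(\star)
\]

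Now define $h(x):=2r\int_0^x\psi(y)(\theta(x)-\theta(y))m'(y)\,dy$. The diagonal contributions $\pm 2r\theta(x)\psi(x)m'(x)$ in $h'(x)$ cancel, and only the $\theta(x)$-derivative survives; invoking $(\star)$ gives
\[
h'(x)=2\theta'(x)\,\frac{\psi'(x)}{S'(x)}=2(r-\mu'(x))\,\frac{\psi'(x)}{S'(x)}.
\]
On the other side, rewrite the ODE as $\sigma^2\psi''=2r\psi-2\mu\psi'$, use $S''/S'=-2\mu/\sigma^2$ when differentiating each piece of $\sigma^2\psi''/S'=2r\psi/S'-2\mu\psi'/S'$, and resubstitute $\sigma^2\psi''=2r\psi-2\mu\psi'$ inside the resulting $2\mu\psi''/S'$. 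The terms in $r\mu\psi/(\sigma^2 S')$ and $\mu^2\psi'/(\sigma^2 S')$ cancel, producing
\[
\frac{d}{dx}\!\left(\sigma^2(x)\frac{\psi''(x)}{S'(x)}\right)=2(r-\mu'(x))\,\frac{\psi'(x)}{S'(x)}=h'(x).
\]

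To pin down the constant of integration, compare both sides as $x\downarrow 0$. Trivially $h(0+)=0$. For the other side, $\sigma^2\psi''/S'=(2r\psi-2\mu\psi')/S'$; the second summand tends to $0$ by $(\star)$ and the boundedness of $\mu$ near $0$, while the first tends to $0$ by a single application of L'H\^{o}pital's rule, reducing $\psi/S'$ to $\psi'/S''=-\sigma^2\psi'/(2\mu S')$, which vanishes by $(\star)$. Hence the two antiderivatives agree, proving the first identity. The $\varphi$-identity is obtained by the mirror argument: integrating the Feller form on $[x,\infty)$ and using $\lim_{y\uparrow\infty}\varphi'(y)/S'(y)=0$ (again from $\infty$ being unattainable plus transversality) gives $\varphi'(x)/S'(x)=-r\int_x^\infty\varphi(y)m'(y)dy$, and the same derivative-and-boundary argument then delivers the formula, with the sign change $\theta(y)-\theta(x)$ arising from the reversed orientation of the integral.

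The main obstacle is the rigorous verification of the boundary limits $\psi'/S'\to 0$ at $0$ and $\varphi'/S'\to 0$ at $\infty$. These are subtle consequences of Feller's boundary classification combined with the transversality condition $\lim_{t\to\infty}\E_x\{e^{-rt}X_t\}=0$ reinterpreted as an integrability / non-explosion bound on the fundamental solutions. With these boundary facts granted, the rest of the argument is a clean combination of integration-by-parts in disguise (hidden in the computation of $h'$) and direct manipulation of the ODE.
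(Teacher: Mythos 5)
The paper does not prove this result; it is quoted verbatim as Theorem~1 from Alvarez (2003), so there is no in-paper argument to compare against.

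Your overall strategy --- cast $(\mathcal{A}-r)\psi=0$ in Feller divergence form, integrate once to get $\psi'/S' = r\int_0^{\cdot}\psi\, m'\,dy$, and then show both sides of the claimed identity have the same derivative $2(r-\mu')\psi'/S'$ and the same value $0$ at $0+$ --- is correct in structure; the derivative computation indeed checks out, and this is essentially the integration-by-parts argument behind the cited result. The gap is the boundary term. To conclude $2r\psi/S'\to 0$ as $x\downarrow 0$ you invoke L'H\^{o}pital's rule, but that rule requires an indeterminate form (not guaranteed: $\psi(0+)>0$ together with $S'(0+)<\infty$ is consistent with $0$ being unattainable) and in any case gives only a one-way implication; moreover, your reduction produces $\psi'/S''=-\sigma^2\psi'/(2\mu S')$, and for drifts with $\mu(x)\to 0$ at the boundary (the paper's own GBM running example) the prefactor $\sigma^2/(2\mu)$ can blow up, so the step ``vanishes by $(\star)$'' does not follow. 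You also slip in a ``$\mu$ bounded near $0$'' hypothesis that is not among the theorem's assumptions. A cleaner route would be to integrate $2(r-\mu')\psi'/S'$ from $0$ to $x$ (making the required integrability near $0$ explicit) and to verify $\sigma^2\psi''/S'\to 0$ directly from transversality, unattainability, and the monotonicity of $\psi'/S'$; until that boundary limit is controlled, what you have is a plausible sketch rather than a complete proof.
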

The following sums up the Laplace transform of the hitting time in this concave case.

\begin{lemma}\label{lemma lap}
\begin{enumerate}[(A)]
	\item Assume that the transversality condition holds and that $0$ is unattainable for a diffusion $X_t$. Further, assume that there exists $\varepsilon>0$ such that $r-\mu'(x)<0$ for all $x\in(0,\varepsilon)$ (it may be negative also elsewhere). Let $b>0$ and $x\in(0,b)$. Then
	\begin{align*}
	\E_x\left\{e^{-\int_0^{\tau_b}\left(r-\mu'(\hat{X}_s)\right)ds}\right\}=\frac{\psi'(x)}{\psi'(b)},
	\end{align*}
	where $\tau_b=\inf\{t\geq0\mid \hat{X}_t= b\}$ is the first hitting time to a state $b$.\label{lemma lap a}
	\item Assume that the transversality condition holds and that $\infty$ is unattainable for a diffusion $X_t$. Further, assume that there exists $H<\infty$ such that $r-\mu'(x)<0$ for all $x\in(H,\infty)$ (it may be negative also elsewhere). Let $a<\infty$ and $x\in(a,\infty)$. Then
	\begin{align*}
	\E_x\left\{e^{-\int_0^{\tau_a}\left(r-\mu'(\hat{X}_s)\right)ds}\right\}=\frac{\varphi'(x)}{\varphi'(b)},
	\end{align*}
	where $\tau_a=\inf\{t\geq0\mid \hat{X}_t= a\}$ is the first hitting time to a state $a$.\label{lemma lap b}
\end{enumerate}
\end{lemma}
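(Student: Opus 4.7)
The two statements are dual, so I focus on part (A); part (B) will follow by the symmetric argument with $-\varphi'$ in place of $\psi'$ and the boundary $\infty$ in place of $0$. My overall plan is to identify $\psi'$ as the relevant positive increasing solution of the ODE governing the associated diffusion $\hat{X}$ under discount $r-\mu'$, and then to obtain the Laplace transform by optional stopping of the corresponding exponentially discounted process.

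Differentiating $\tfrac12\sigma^2\psi''+\mu\psi'=r\psi$ in $x$ yields
\begin{align*}
\tfrac12\sigma^2\psi'''+(\mu+\sigma\sigma')\psi''=(r-\mu')\psi',
\end{align*}
so $\psi'$ solves $(\hat{\mathcal{A}}-(r-\mu'))u=0$, and since $\psi$ is strictly increasing $\psi'>0$ on $\R_+$. By It\^o's formula the process $M_t:=e^{-\int_0^t(r-\mu'(\hat{X}_s))ds}\psi'(\hat{X}_t)$ is a local martingale. For $0<\delta<x<b$, set $\tau_b=\inf\{t\ge0:\hat{X}_t=b\}$ and $\tau_\delta=\inf\{t\ge0:\hat{X}_t=\delta\}$. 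On $[\delta,b]$ both $\psi'$ and $r-\mu'$ are bounded, so the stopped process $M^{\tau_b\wedge\tau_\delta\wedge T}$ is a bounded true martingale, and optional stopping gives
\begin{align*}
\psi'(x)=\E_x\!\left[e^{-\int_0^{\tau_b\wedge\tau_\delta\wedge T}(r-\mu'(\hat{X}_s))ds}\,\psi'(\hat{X}_{\tau_b\wedge\tau_\delta\wedge T})\right].
\end{align*}
Sending $T\to\infty$ (the transversality condition controls the long-time tail) and then $\delta\downarrow 0$ should collapse the right-hand side to $\psi'(b)\,\E_x\{e^{-\int_0^{\tau_b}(r-\mu'(\hat{X}_s))ds}\}$, which is the claim after dividing by $\psi'(b)$.

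The main obstacle is the $\delta\downarrow 0$ limit, because $\psi'$ need not be globally monotone---the whole point of the lemma being that convexity of $\psi$ may fail. Here is where the hypothesis $r-\mu'<0$ on $(0,\varepsilon)$ enters through Theorem \ref{theo integral}: writing $\theta(y)=ry-\mu(y)$, the assumption reads $\theta'<0$ on $(0,\varepsilon)$, so for $x\in(0,\varepsilon)$ the integrand $\psi(y)(\theta(x)-\theta(y))m'(y)$ is negative for $y<x$. Hence $\psi''<0$ on $(0,\varepsilon)$, $\psi'$ is strictly decreasing near $0$, and $\psi'(0+)\in(0,\infty]$ exists. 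Combined with the unattainability of $0$ for $\hat{X}$---which, under the present assumptions, one verifies from the Feller test applied via the scale and speed relations $\hat{S}'(x)=S'(x)/\sigma^2(x)$ and $\hat{m}'(x)=2/S'(x)$---one obtains $\tau_\delta\uparrow\infty$ almost surely as $\delta\downarrow 0$, so that the boundary contribution $\E_x\{e^{-\int_0^{\tau_\delta}(r-\mu')ds}\psi'(\delta)\mathbf{1}_{\{\tau_\delta<\tau_b\}}\}$ vanishes by monotone (or dominated) convergence. Part (B) is identical after replacing $(0,\varepsilon)$ by $(H,\infty)$ and $\psi'$ by $-\varphi'$.
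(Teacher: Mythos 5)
Your setup is fine (and matches the paper's starting point: $\psi'$ and $\varphi'$ solve $(\hat{\mathcal{A}}-(r-\mu'))u=0$, and the problem reduces to a two-sided exit quantity on $(\delta,b)$ followed by the limit $\delta\downarrow 0$), but the step you wave through is exactly the step that constitutes the whole proof, and your justification for it fails. You claim the lower-boundary contribution $\E_x\{e^{-\int_0^{\tau_\delta}(r-\mu'(\hat{X}_s))ds}\psi'(\delta);\tau_\delta<\tau_b\}$ vanishes as $\delta\downarrow0$ "by monotone (or dominated) convergence" because $\tau_\delta\uparrow\infty$. But on $(0,\varepsilon)$ the hypothesis is $r-\mu'<0$, so the exponential factor is \emph{not} a discount there: it exceeds $1$ and grows without bound along paths that linger near $0$ before reaching $\delta$, and simultaneously $\psi'(\delta)\uparrow\psi'(0+)$, which may be $+\infty$ (concavity of $\psi$ near $0$ makes $\psi'$ increase as the argument decreases). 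So $\tau_\delta\to\infty$ a.s. gives neither a dominating function nor monotone decrease of the integrand; the product of a blowing-up exponential, a possibly blowing-up constant $\psi'(\delta)$, and a shrinking event has no a priori limit. The paper's proof is devoted precisely to showing this term tends to zero: it writes the exit functional explicitly as in \eqref{eq yrite}, rewrites the offending term as $\frac{\psi'(x)}{\psi'(a)}\cdot\frac{(\varphi'/\psi')(x)-(\varphi'/\psi')(b)}{(\varphi'/\psi')(a)-(\varphi'/\psi')(b)}\ge 0$ using the monotonicity of $\varphi'/\psi'$ (from the Wronskian identity $\varphi''\psi'-\psi''\varphi'=2rB\hat{S}'>0$), bounds $\psi'(\varepsilon)/\psi'(a)\le 1$ by the concavity from Theorem \ref{theo integral}, and then invokes the boundary behaviour of $\psi'/S'$ and $\varphi'/S'$ at the unattainable boundary $0$ to conclude the squeeze. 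None of this is replaceable by a soft convergence theorem.

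A secondary, smaller gap: in your martingale route the passage $T\to\infty$ also needs an argument, because the "discount" $r-\mu'$ may be negative on part of $[\delta,b]$, so $\E_x\{e^{-\int_0^T(r-\mu'(\hat{X}_s))ds}\psi'(\hat{X}_T);T<\tau_\delta\wedge\tau_b\}$ is not obviously killed by the transversality condition, which concerns the original diffusion $X$ under the constant discount $r$, not $\hat{X}$ under a sign-changing rate. The paper sidesteps both issues by working directly with the representation of the two-sided Laplace transform in the span of $\psi'$ and $\varphi'$ rather than via optional stopping; if you want to keep your probabilistic route, you would still have to import the paper's comparison argument (or an equivalent) to control the $\delta\downarrow0$ and $T\to\infty$ contributions.
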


\begin{proof} The proof follows quite closely that of Theorem 9 in \cite{Alvarez01b}.

\textbf{(A)} Now $\varphi''(x)\psi'(x)-\psi''(x)\varphi'(x)=2rB\hat{S}'(x)>0$, where $\hat{S}'(x)=S'(x)/\sigma^2(x)$ is the scale derivative of the process $\hat{X}$. Because of this, we see that the ratio $\varphi'(x)/\psi'(x)$ is increasing. Moreover, as $\psi'$ and $\varphi'$ are two independent solutions to $\left(\hat{\mathcal{A}}-(r-\mu'(x))\right)u(x)=0$, any solution to it can be expressed as $c_1\psi'(x)+c_2\varphi'(x)$ for some $c_1,c_2\in\R$. It is now an easy exercise in linear algebra to demonstrate that if $x\in(a,b)$, then 
\begin{align}\label{eq yrite}
G(x;a,b):=\E_x\left\{e^{-\int_0^{\tau_{(a,b)}}\left(r-\mu'(\hat{X}_s)\right)ds}\right\}= \frac{\varphi'(x)-\frac{\varphi'(b)}{\psi'(b)}\psi'(x)}{\varphi'(a)-\frac{\varphi'(b)}{\psi'(b)}\psi'(a)}+ \frac{\psi'(x)-\frac{\psi'(a)}{\varphi'(a)}\varphi'(x)}{\psi'(b)-\frac{\psi'(a)}{\varphi'(a)}\varphi'(b)},
\end{align}
where $\tau_{(a,b)}=\inf\left\{t\geq0\mid \hat{X}_t\notin (a,b)\right\}$ is the first exit time of $\hat{X}$ from an open interval $(a,b)$. Invoking the alleged boundary conditions of $X$ implies that 
\begin{align*}
\frac{\psi'(x)-\frac{\psi'(a)}{\varphi'(a)}\varphi'(x)}{\psi'(b)-\frac{\psi'(a)}{\varphi'(a)}\varphi'(b)} =\frac{\psi'(x)-\frac{\psi'(a)/S'(a)}{\varphi'(a)/S'(a)}\varphi'(x)}{\psi'(b)-\frac{\psi'(a)/S'(a)}{\varphi'(a)/S'(a)}\varphi'(b)}\to \frac{\psi'(x)}{\psi'(b)}\quad \text{as $a\to0$.}
\end{align*}
Consider now the first term on the right hand side of \eqref{eq yrite}. We want to show that it convergences to $0$ as $a$ approaches $0$. To that end, we firstly notice that it can be written as
\begin{align*}
\frac{\psi'(x)}{\psi'(a)}\frac{\frac{\varphi'(x)}{\psi'(x)}-\frac{\varphi'(b)}{\psi'(b)}}{\frac{\varphi'(a)}{\psi'(a)}-\frac{\varphi'(b)}{\psi'(b)}}\geq0,
\end{align*}
where the inequality follows from the fact that $\varphi'/\psi'$ is increasing and $\psi'>0$. Secondly, from Theorem \ref{theo integral} we see at once that $\psi$ is concave on $(0,\varepsilon)$, whence we can approximate
\begin{align*}
\frac{\psi'(x)}{\psi'(a)}\frac{\frac{\varphi'(x)}{\psi'(x)}-\frac{\varphi'(b)}{\psi'(b)}}{\frac{\varphi'(a)}{\psi'(a)}-\frac{\varphi'(b)}{\psi'(b)}}
=\frac{\psi'(x)}{\psi'(\varepsilon)}\frac{\psi'(\varepsilon)}{\psi'(a)}\frac{\frac{\varphi'(x)}{\psi'(x)}-\frac{\varphi'(b)}{\psi'(b)}}{\frac{\varphi'(a)}{\psi'(a)}-\frac{\varphi'(b)}{\psi'(b)}}\leq
\frac{\psi'(x)}{\psi'(\varepsilon)}\frac{\frac{\varphi'(x)}{\psi'(x)}-\frac{\varphi'(b)}{\psi'(b)}}{\frac{\varphi'(a)}{\psi'(a)}-\frac{\varphi'(b)}{\psi'(b)}},\quad\text{for all }a<\varepsilon.
\end{align*}
Letting $a\to0$ and invoking again the alleged boundary conditions of $X$ we get 
\begin{align*}
\frac{\psi'(x)}{\psi'(\varepsilon)}\frac{\frac{\varphi'(x)}{\psi'(x)}-\frac{\varphi'(b)}{\psi'(b)}}{\frac{\varphi'(a)}{\psi'(a)}-\frac{\varphi'(b)}{\psi'(b)}}\to \frac{\psi'(x)}{\psi'(\varepsilon)}\frac{\frac{\varphi'(x)}{\psi'(x)}-\frac{\varphi'(b)}{\psi'(b)}}{-\frac{\varphi'(b)}{\psi'(b)}}\leq0,
\end{align*}
indicating that the first term on the right hand side of \eqref{eq yrite} tends to zero as $a$ tends to zero. Consequently $$\lim_{a\to0}G(x;a,b)=\E_x\left\{e^{-\int_0^{\tau_b}\left(r-\mu'(\hat{X}_s)\right)ds}\right\}=\frac{\psi'(x)}{\psi'(b)}.$$

\noindent \textbf{(B)} Proof is analogous to the part\eqref{lemma lap a}.
\end{proof}

\subsection{Examples}\label{subsec connection example}
Again, let $X_t$ be a geometric Brownian motion for which $\psi(x)=x^2$ and $\varphi(x)=x^{-6}$. Let us present perhaps the most striking example of the paper.

\begin{example}\label{example last}
In this example we will illustrate that the associated stopping problem can be associated problem to a two different singular control problem \emph{at the same time}. Let
\begin{align*}
g_{\ref{example last}}(x)=\begin{cases}
x^2,\quad&x\leq1\\
x^{-8},\quad&x> 1.
\end{cases}
\end{align*}
It is quite straightforward to show that
\[\hat{V}_{\ref{example last}}(x)=\begin{cases}
\frac{1}{2}\psi'(x)\quad &x\leq 1\\
-\frac{1}{6}\varphi'(x)\quad &x>1.
\end{cases}\]
Moreover, now $M'=\{1\}$ and $\sup\{g_{\ref{example last}}/\psi'\}=\frac{1}{2}$. Thus we can say that for $x\leq 1$ we have $W_Z(x)=\frac{1}{2}\psi(x)$ and $\hat{V}_{\ref{example last}}(x)=W'_Z(x)$. (Actually, applying Theorem 1 from \cite{Alvarez00b}, we can say that $Z^1$ is the optimal control on $\R_+$ and that $W_Z(x)=\int_1^xg_{11}(z)dz +\frac{1}{2}\psi(1)$ for $x\geq 1$.)

On the other hand, now we also have $N'=\{1\}$ with $\sup\{g_{\ref{example last}}/\varphi'\}=\frac{1}{6}$, so that $W_Y(x)=\frac{1}{6}\varphi(x)$ and $\hat{V}_{\ref{example last}}(x)=W'_Y(x)$ for $x\geq1$. (Again we can prove that, for $x\leq 1$, $W_Y(x)=\int_x^1g_{\ref{example last}}(y)dy +\frac{1}{6}\varphi(1)$.)

It follows that the value $\hat{V}_{\ref{example last}}$ of the associated optimal stopping problem is, in fact, the associated value function for \emph{two} different singular control problems on separate regions $(0,1)$ and $(1,\infty)$:
\begin{align*}
\hat{V}_{\ref{example last}}(x)=\begin{cases}
W'_Z(x)\quad &x<1\\
W'_Y(x)\quad & x>1.
\end{cases}
\end{align*}
 This means that the connection between the one-sided singular control and optimal stopping problem is in general a local property rather than global.
\end{example}

\begin{acknowledgements}
The author is grateful to Professor L.H.R. Alvarez for helpful conversations and valuable comments during the research. The author would also like to thank Professor M. Zervos for pointing out Example \ref{ex 6}. Lastly, S. Christensen is acknowledged for his helpful comments on the contents of the earlier draft of the paper.
\end{acknowledgements}

\bibliographystyle{amsplain}
\bibliography{NearBoundariesBib}

\end{document}